\tikzstyle{place}=[circle,draw=black,fill=blue!20,thick,
\tikzstyle{dot}=[circle,draw=black,fill=black!20,thick,
\tikzstyle{rec}=[rectangle,draw=black,fill=black!20,thick,
\tikzstyle{pre}=[<-,shorten <=0pt, >=stealth',semithick]
\tikzstyle{post}=[->,shorten <=0pt, >=stealth',semithick]
\newcounter{dummy} \numberwithin{dummy}{section}
\newtheorem{thm}[dummy]{Theorem}
\newtheorem{defn}[dummy]{Definition}
\newtheorem{lem}[dummy]{Lemma}
\newtheorem{pro}[dummy]{Proposition}
\newtheorem{rem}[dummy]{Remark}
\DeclareMathOperator{\br}{br}
\DeclarePairedDelimiter{\ceil}{\lceil}{\rceil}
\DeclarePairedDelimiter{\floor}{\lfloor}{\rfloor}
\def\a{\alpha}
\def\b{\beta}
\def\g{\gamma}
\def\d{\delta}
\def\D{\Delta}
\def\l{\lambda}
\def \L {\Lambda}
\def\s{\sigma}
\def\O{\Omega}
\def\o{\omega}
\def\L{\Lambda}
\def\Z{\mathbb{Z}}
\def\R{\mathbb{R}}
\def\N{\mathbb N}
\def\SS{\Subset}
\def\CC{\mathcal{C}}
\def\FF{\mathcal{F}}
\def\TT{\mathcal{T}}
\def\PP{\mathcal{P}}
\def\M{\mathcal M}
\numberwithin{equation}{section} 
\numberwithin{dummy}{section}
\begin{document}
\title[Non-robust phase transitions on trees]{Non-robust phase transitions in the generalized clock model on trees}

\author{C. K\"ulske, P. Schriever}

\address{C.\ K\"ulske\\ Fakult\"at f\"ur Mathematik,
Ruhr-University of Bochum, Postfach 102148,\,
44721, Bochum,
Germany}
\email {Christof.Kuelske@ruhr-uni-bochum.de}

\address{P.\ Schriever\\ Fakult\"at f\"ur Mathematik,
Ruhr-University of Bochum, Postfach 102148,\,
44721, Bochum,
Germany}
\email {Philipp.Schriever-d8j@ruhr-uni-bochum.de}

\begin{abstract} 
{\bf } 
Pemantle and Steif provided a sharp threshold for the existence of a RPT (robust phase transition) for the continuous 
rotator model and the Potts model 
in terms of the branching number and the second eigenvalue of the transfer operator, 
where a robust phase transition is said to occur if an arbitrarily weak coupling 
with symmetry-breaking boundary conditions suffices to induce symmetry breaking in the bulk. 
They further showed that for the Potts model RPT occurs at a different threshold than PT (phase transition in the sense of multiple Gibbs measures), 
and conjectured that RPT and PT should occur at the same threshold in the continuous rotator model. 

We  consider the class of $4$- and $5$-state rotation-invariant spin models with reflection symmetry on general trees which contains 
the Potts model and the clock model with scalarproduct-interaction  as limiting cases. 
The clock model can be viewed as a particular discretization which is obtained from the 
classical rotator model with state space $S^1$.  


We analyze the transition between PT=RPT and PT$\neq$RPT,
in terms of the eigenvalues of the transfer matrix of the model at the critical 
threshold value for the existence of RPT. The transition between the two regimes depends sensitively on the third largest eigenvalue.  

\end{abstract}

\maketitle

\thispagestyle{empty}

\noindent{\bf Mathematics Subject Classifications (2010).} 82B26 (primary);
60K35 (secondary)

\noindent{\bf{Key words.}} 
Trees, Gibbs measures, phase transition, robust phase transition, clock model, XY-model. 
\section{Introduction}
We consider spin models on locally finite trees $T=(V,E)$ where the spin variables $\sigma_x, x \in V$, take values in the finite local state space $\O_0 = \{ 0,...,q-1\}$, $q \in \N$. 
The interaction between the spin variables will be given by a transfer matrix which is circulant, i.e. possesses discrete rotation-invariance, and which can therefore be uniquely described by its eigenvalues. 
An interesting question is whether or not there exists more than one extremal Gibbs measure, i.e., if there is a phase transition for this model. A little bit more involved is the question if it is possible to identify precise values for the free parameters of the transfer matrix at which a phase transition occurs. 
For background and recent results see \cite{BiEnEn16,BlGa90,EnErIaKu12,FoKu09,GaRuSh12,GaRo97,KuRo14,KuRo16,KuSc16,PP2010,Sly}.\\
Another interest in spin models on trees comes from network theory, where random graphs are used to describe the interaction between different agents. In these models the random graphs are often assumed (or proven in the large-graph limit) to be locally tree-like. To understand the behavior of these systems it is important to understand the situation on the tree first \cite{DeMo10, DeMoSu13,DoGiHo14,DoKuSc16}. 

For the Ising model on an arbitrary infinite tree with constant interaction strength $J$ the exact critical inverse temperature $\b_c$ of phase transition is well-known and is given by $J \b_c = k \coth^{-1}(\br T)$, where $k$ is Boltzmann's constant and $\br T$ is the {\em branching number} of the tree which captures the average number of edges per vertex \cite{Ly89}.
Similarly, the critical value for the existence of infinite clusters under independent Bernoulli percolation and the critical value for recurrence of random walks on the tree are given in terms of $\br T$ as well \cite{LyRW}.

One way to determine whether or not the spin model exhibits a phase transition is to exploit the recursion formula for the marginal distributions of the finite-volume Gibbs measures under boundary condition, see \eqref{rec}. 
For such models there is no phase transition, if and only if all the single-site marginals of the finite-volume Gibbs measures converge to the equidistribution for every boundary condition $\o \in \O$ (Proposition \ref{pro:rep}). For some models which are stochastically increasing in the boundary condition, in a sense described below, it is even enough to prove that the marginals of the finite-volume Gibbs measure under {\em plus boundary condition} converge to the equidistribution. This is for example the case in the Ising and the continuous rotator model, but also in the {\em generalized clock model} which will be introduced shortly (Proposition \ref{pro:plus}). 


Showing that the Gibbs marginals converge under the tree recursion is in general a challenging problem: Along the leaves of the finite subtree we have to fix a boundary condition, which means that we are starting from a Dirac distribution, i.e., far away from the equidistribution, and the recursion, in general, lacks any convexity.
 
In \cite{RPT} Pemantle and Steif introduced the notion of a {\em robust phase transition} (or RPT) which makes the tree recursion for the Gibbs marginals easier to analyze. Such a robust phase transition is said to occur when even for an arbitrary weakening of the interaction along the spins at the boundary of a sequence of finite sub-volumes $\L_n \SS V$, we still retain information about the spins at a single site in the thermodynamic limit $n \to \infty$, i.e., even under this stronger condition the single-site Gibbs marginals are not converging to the equidistribution. For more background on phase-transitions in systems 
with weak boundary couplings see also \cite{En00,EnMeNe02}.
The question whether or not there is a {\em robust} phase transition is easier to answer as one can make use of local arguments around the equidistribution. To be more specific: Under the assumption that there is a robust phase transition it is possible to start the recursion in a distribution which is already arbitrarily close to the equidistribution. Then, we only have to make sure that under the recursion we are repelled from the equidistribution. The behavior is heuristically understood in terms of the linearization of the recursion, but to control the non-linearities along relevant directions is delicate.    

Pemantle and Steif provided a sharp threshold for the existence of a RPT for the continuous rotator model (also called classical 
Heisenberg model) in any dimension, as well as for the Potts model. Similarly to the threshold for {\em regular} phase transitions in the Ising model, it is given in terms of the  branching number of the tree and an effective coupling parameter, which turns out to be the second eigenvalue of the transfer operator. 
Furthermore they showed that for the Potts model RPT occurs at a different threshold than the regular phase transition, but conjectured that RPT and regular phase transition should occur at the same threshold in the continuous Heisenberg models \cite{RPT}. 
It is therefore interesting to investigate generalized clock models as interpolations between the Potts model and the model of plane rotators 
(XY-model, Heisenberg model on $S^1$), from 
the perspective of the study of RPTs. 
(For more results around the issue of differences and similarities between continuous spin-models and clock models see e.g. \cite{FrSp81, MaSh11, EnKuOp11,JaKu14}.) 

This paper consists of two main parts:\\

\textit{Applicability of Pemantle-Steif theory}: \\
We consider a class of finite-state spin models on general trees and show that for a wide range of circulant transfer matrices the sharp threshold for the existence of RPT given by Pemantle and Steif holds in these cases as well (Proposition \ref{pro:rpt}). 
More precisely, this result holds under the assumption that the matrix elements of the transfer matrix are non-increasing in the distance to the diagonal.  
This is equivalent to saying that the pair potential describing the interaction of two neighboring spins along the tree is non-decreasing in the difference 
of the spin values along the discrete circle. 
\\
 
\textit{Bifurcation analysis on binary tree for $q=4,5$}: \\
For $q=4,5$ we show that in these models, which we call {\em generalized clock models}, RPT and regular phase transition are in general not equivalent and we determine the transition point between both scenarios in terms of the third eigenvalue of the corresponding transfer operator for two examples on the binary tree (Theorem \ref{thm:rpt45}). For a visualization of the regions of the different regimes see Figures \ref{PTvsRPT}, \ref{PTvsRPTq5} in Section 4. 
In the standard clock model with cosine-potential it turns out that there are no non-robust phase transitions (unlike $q=3$). 
\\
The non-equivalence is shown by studying the tree recursion on the level of the Fourier coefficients $\a_1, \a_2$ of the marginal distributions of the finite-volume Gibbs measures. This leads us to a two-dimensional fixed-point equation which turns out to be polynomial. The number of solutions to this fixed-point equation can then be given by analytic arguments.


\section{The generalized clock model: Definitions and main results}


Let $T=(V,E)$ be a locally finite tree with vertex set $V$ and edge set $E$ which is rooted at some vertex $o \in V$. The number of neighbors of the vertices is assumed to be bounded by a finite constant $B$ throughout. Let $d_G: V \times V \to\mathbb{N}$ be the graph distance on $T$, i.e., $d_G(x,y)$ equals the number of edges in the shortest path from $x$ to $y$ for each pair $x,y \in V$. Also, let $|x|:=d_G(x,o)$ for any $x\in V$. 
Two vertices $v$ and $w$ are called nearest neighbors if $d_G(v,w)=1$, i.e., if there exists an edge $e\in E$ connecting them. 
We use the notation $e=\{ v,w \}$. We write $v \to w$ if $d_G(v,w)=1$ and $|w|=|v|+1$, i.e., $v$ is the parent of $w$.  Finite subsets $\L$ of $V$ will be denoted by $\L \SS V$.

Every vertex $v\in V$ will be equipped with a spin variable $\sigma_v$ taking values in the finite metric space $(\O_0 , d)$, where $\O_0 :=\lbrace 0,...,q-1 \rbrace$ and $d(k,l):=1- \cos\big(2\pi(k-l)/q\big)$. Let $G:= \mathbb{Z}_q$ be the \textit{cyclic group} acting transitively on $\O_0$. 
Some $|\O_0| \times |\O_0|$ probability transition matrix $M$ is chosen, s.t. $$M(k,l) = M(g\circ k,g \circ l)  \quad \forall k,l \in \O_0, \;  \forall g\in G,$$
and $M(0,j) = M(0,k)$ if $d(0,j) = d(0,k)$.
We will further assume that $M$ is { \em non-increasing}, i.e., $M(i,j) \leq M(k,l)$ if $d(i,j) \geq d(k,l)$. 
Due to the Perron-Frobenius theorem the stochastic matrix $M$ has the distinguished eigenvalue $\l_0=1$ with the following properties: First, $|\l|<\l_0$ for all eigenvalues $\l\neq \l_0$ of $M$, and secondly, $\l_0$ is simple. We note that because $M$ is invariant under the $G$-action the matrix is circulant and can hence be completely described by its first row, which will be denoted by $r$.

As $M$ is a transfer matrix (which even happens to be a stochastic matrix the way we have 
defined it) we can define an associated potential $\Phi(i,j)$ by  letting 
\begin{equation*}\begin{split}
\Phi(i,j) :=-\log M(i,j)
\end{split}\end{equation*}
and define the corresponding formal infinite-volume Hamiltonian by 
\begin{equation*}\begin{split}
H(\o)=\sum_{\{ v,w \} \in E}\Phi(\o_v,\o_w).
\end{split}\end{equation*}
We call a spin model with such a Hamiltonian a \textit{generalized clock model}. 
Using inverse discrete Fourier transformation we can describe the circulant transfer matrix $M$ (and the potential $\Phi$ accordingly) also by its eigenvalues. Remember that the eigenvalues $\l_j$ of $M$ are given by
$$\l_j = \sum_{k=0}^{q-1} r_k e^{-2\pi i j k /q}, \quad j\in \{ 0,...,q-1\}$$
and the first row of $M$ is then recovered by
$$r_l = \frac{1}{q} \sum_{k=0}^{q-1} \l_k e^{2\pi ilk / q}, \quad l\in \{0,...,q-1\}.$$ \\
If the eigenvalues are chosen such that $M$ is of the form 
$$M_{i,j} = \left\{\begin{array}{cl} e^{J} /Z, & \mbox{if } i= j \\ 1 /Z ,& \mbox{if }i\neq j \end{array}\right.$$
where $J>0$ suitable and $Z>0$ is a normalizing constant, we recover the \textit{Potts model}. If $M$ is given by
$$M_{i,j} = \frac{1}{Z} e^{J\cos(2\pi (i-j)/q)},$$
for some $J>0$ and normalizing constant $Z>0$, we recover the \textit{standard-clock model} with scalarproduct-interaction. 

A \textit{cutset} $C$ is a finite set of vertices such that every self-avoiding infinite path starting from the root $o$ intersects $C$ such that there are no vertices $x,y \in C$ with $|x|<|y|$ and $x$ lies in the shortest path from $o$ to $y$. For a cutset $C$ the graph $T\setminus C$ consists of some infinite components and one finite component. The finite component  will be denoted by $C^i$ and the union of the infinite components of $T\setminus C$ will be denoted by $C^o$. A sequence of cutsets $\{ C_n \}_{n\in\mathbb{N}}$ is said to {\em exhaust} V, if for every vertex $v\in V$ there exists a $N\in\mathbb{N}$ s.t. $v \in C_n^i$ for every $n\geq N$. We define the configuration space by $\O = \O_0^V$. 

\begin{defn}
Let $C$ be a cutset of the tree $T$ and $\d \in \O$ a spin configuration 
which serves as a boundary condition to the interior.
The {\em finite-volume Gibbs measure on $\O_0^{C^i}$ under boundary condition $\d$} is defined by
\begin{equation}\label{gb}
\mu_C^{\Phi, \d}(
\o_{C^i}) = \frac{1}{Z_C^{\Phi}(\d)} \exp\Big( - H_C^{\Phi, \d} (\o_{C^i})\Big),
\end{equation}
where
$$H_C^{\Phi, \d}(\o_{C^i}) = \sum_{\substack{\{ x,y\} \in E: \\ x,y \in C^i}} \Phi(\o_x,\o_y) + \sum_{\substack{\{ x,y\} \in E: \\ x\in C^i ,y \in C}} \Phi(\o_x,\d_y)$$
and $$Z_C^{\Phi}(\d) = \sum_{\tilde \o_{C^i}\in \O_0^{C^i}} \exp \Big( - H^{\Phi, \d}_C(\tilde \o_{C^i}) \Big)$$ is a normalizing constant or {\em partition function}. If the second summand in the Hamiltonian $H$ is left out, i.e., if there is no interaction with the boundary, we call this the {\em free Gibbs measure}. 
\end{defn}

Note that the family of finite-volume Gibbs measures as we have defined them above can also be interpreted as a {\em (local) Gibbsian specification} $\g^\Phi$ \cite[Definition 1.23]{Ge88} which is given by
\begin{equation}\label{GibbsSpec}
\g^\Phi_{C^i}(f \mid \d) = \int \mu_C^{\Phi,\d}(d\o_{C^i}) f(\o_{C^i} \d_{(C^i)^c}). 
\end{equation}
This specification is a family of probability kernels with the property that $\g^\Phi_{C^i}(\cdot \mid \d) \in \M_1(\O_0^V)$ for every $\d \in \O$ and s.t. $\g^\Phi_{C^i}(A \mid \cdot)$ is measurable w.r.t. the sigma-algebra generated by the spin variables $\{ \s_v, v\in(C^i)^c \}$ for every $A\in (\PP(\Z_q))^V$.

Let a measure space $(X, \FF_X, \mu)$, a measurable space $(Y, \FF_Y)$ and a probability kernel $\pi$ from $\FF_X$ to $\FF_Y$ be given. Then the {\em push-forward} of $\mu$ under $\pi$ is defined by 
\begin{equation*}
\mu \pi(A) = \int \mu(d\o) \pi(A \mid \o) \quad \forall A \in \FF_Y. 
\end{equation*}
A measure $\mu \in \M_1(\O_0^V)$ is called a {\em Gibbs measure} w.r.t. a specification $(\g_\L)_{\L \SS V}$ if 
\begin{equation*}
\mu = \mu \g_\L \quad \forall \L \SS V.
\end{equation*} 
The probability kernels given by $\eqref{GibbsSpec}$ are only defined for finite sub-volumes $\L \SS V$ of the specific form $\L = C^i$ where $C$ is a cutset, but this is not an issue w.r.t. identifying the set of Gibbs measures:

\begin{lem}
Suppose $\g$ is a specification and $\mu$ a measure on $\O_0^V$. Then the following statements are equivalent:
\begin{enumerate}[(i)]
\item
$\mu$ is a Gibbs measure.

\item
$\mu \g_\L = \mu$ for every $\L \SS V$.

\item
$\mu \g_{C^i} = \mu$ for every cutset $C \SS V$.
\end{enumerate}
\end{lem}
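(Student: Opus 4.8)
The plan is to prove the chain of equivalences (i) $\Leftrightarrow$ (ii) $\Leftrightarrow$ (iii). The implication (i) $\Rightarrow$ (ii) is immediate from the definition of a Gibbs measure, and (ii) $\Rightarrow$ (iii) is trivial since every $C^i$ is in particular a finite subset of $V$. So the entire content is in the reverse implication (iii) $\Rightarrow$ (ii): if $\mu$ is invariant under the kernels $\g_{C^i}$ indexed by cutsets, then it is invariant under $\g_\L$ for \emph{every} finite $\L \SS V$.

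First I would observe the consistency (compatibility) property of a specification: for $\L \subseteq \Delta$ finite one has $\g_\Delta \g_\L = \g_\Delta$. In particular, if $\mu \g_\Delta = \mu$ then $\mu \g_\L = \mu \g_\Delta \g_\L = \mu \g_\Delta = \mu$, so invariance under a \emph{larger} volume forces invariance under any smaller one it contains. Thus it suffices to show that every finite $\L \SS V$ is contained in some set of the form $C^i$ for a cutset $C$. Given $\L$, let $N := \max_{x \in \L} |x|$ and take $C := \{ x \in V : |x| = N+1 \}$ — i.e. the sphere of radius $N+1$ around the root. Since $T$ is locally finite and rooted, $C$ is finite (using the uniform bound $B$ on the degree), every self-avoiding infinite path from $o$ must cross level $N+1$ exactly once, and no vertex of $C$ lies on the shortest path from $o$ to another vertex of $C$ (they are all at the same level); hence $C$ is a genuine cutset. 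Its interior component $C^i$ consists of all vertices $x$ with $|x| \le N$ that are separated from infinity by $C$, which in a tree is simply $\{ x : |x| \le N\}$, and this contains $\L$. Then $\mu \g_{C^i} = \mu$ by (iii), and by the compatibility argument above $\mu \g_\L = \mu$, giving (ii).

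The one point requiring a little care — and the main (mild) obstacle — is making the compatibility identity $\g_\Delta \g_\L = \g_\Delta$ rigorous in the present setting, since the kernels in \eqref{GibbsSpec} were only written down for volumes of the form $C^i$. The clean route is to invoke the general theory: the family $(\g^\Phi_{C^i})$ extends uniquely to a genuine specification $(\g_\L)_{\L \SS V}$ in the sense of \cite[Definition 1.23]{Ge88} because the potential $\Phi$ is a (finite-range, here nearest-neighbour) potential, and Gibbs specifications built from potentials automatically satisfy the consistency relations; alternatively one checks $\g_\Delta \g_\L = \g_\Delta$ directly from the product structure of the finite-volume weights $\exp(-H^{\Phi,\delta}_C)$, which factor over edges, so that summing out the spins in $\Delta \setminus \L$ reproduces exactly the weight for the smaller volume. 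Either way the identity is standard, and once it is in hand the argument above closes the loop (iii) $\Rightarrow$ (ii) $\Rightarrow$ (i), completing the proof.
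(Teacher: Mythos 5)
Your proof is correct and takes essentially the same approach the paper does: the paper simply cites Georgii's Remark~1.24, which is exactly the cofinality-plus-consistency argument you spell out, namely that the cutset interiors $C^i$ form a cofinal family of finite volumes and $\mu\g_\L = \mu\g_{C^i}\g_\L = \mu\g_{C^i} = \mu$ by the specification's compatibility relation. Your explicit construction of a cutset $C$ (the sphere at level $N+1$) with $\L \subseteq C^i$ supplies the cofinality, and the rest follows as you say.
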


For a proof, see \cite[Remark 1.24]{Ge88}. This lemma allows us to give an alternative characterization of Gibbs measures (in the infinite volume) as follows:

\begin{lem}
A probability measure $\mu$ in the infinite volume $\O_0^{V}$ is called a {\em Gibbs measure} for the potential $\Phi$ if for any cutset $C$ the conditional distribution on $C^i$ given a boundary configuration $\d \in \O_0^V$ is given by $\mu_C^{\Phi, \d}$, i.e.,
$$\mu(\o_{C^i} \mid \d_{C\cup C^o}) = \mu_C^{\Phi, \d}(\o_{C^i}).$$   
\end{lem}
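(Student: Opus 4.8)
The plan is to treat this statement as a reformulation of the DLR equation and to deduce it from the preceding lemma together with the standard identification of a specification with a system of regular conditional probabilities.

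First I would invoke the preceding lemma, by which $\mu$ is a Gibbs measure for $\Phi$ (in the sense $\mu=\mu\g_\L$ for all $\L\SS V$) precisely when $\mu\g^\Phi_{C^i}=\mu$ for every cutset $C$. Since both that condition and the asserted one can be checked one cutset at a time, it suffices to fix a cutset $C$ and show that $\mu\g^\Phi_{C^i}=\mu$ holds if and only if the conditional law of $(\s_v)_{v\in C^i}$ under $\mu$, given the $\s$-algebra $\FF_{C\cup C^o}:=\s(\s_v:\,v\in C\cup C^o)$, equals $\mu_C^{\Phi,\d}$ for $\mu$-a.e.\ boundary configuration $\d$. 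Here I would record that $C^i$ is finite because $T$ is locally finite, that $(C^i)^c=C\cup C^o$, and that $\O_0^V$ is a compact metric space, so regular conditional probabilities exist.

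The key step is the following. From \eqref{GibbsSpec} and the measurability property stated right after it, $\d\mapsto\g^\Phi_{C^i}(A\mid\d)$ is $\FF_{(C^i)^c}$-measurable for every measurable $A$, and $\g^\Phi_{C^i}$ is \emph{proper}, i.e.\ $\g^\Phi_{C^i}(A\cap B\mid\d)=\mathbf{1}_B(\d)\,\g^\Phi_{C^i}(A\mid\d)$ for $B\in\FF_{(C^i)^c}$, since $\g^\Phi_{C^i}(\cdot\mid\d)$ is carried by the configurations agreeing with $\d$ off $C^i$. This places us exactly in the setting of \cite[Remark 1.24]{Ge88}, which gives that $\mu\g^\Phi_{C^i}=\mu$ is equivalent to $\g^\Phi_{C^i}(A\mid\cdot)$ being a version of $\mu(A\mid\FF_{(C^i)^c})$ for every measurable $A$. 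It then remains only to identify this kernel: by construction $\g^\Phi_{C^i}(f\mid\d)=\int\mu_C^{\Phi,\d}(d\o_{C^i})\,f(\o_{C^i}\d_{(C^i)^c})$, which for $f$ depending only on the coordinates in $C^i$ is exactly the $\mu_C^{\Phi,\d}$-average of $f$, and since $H_C^{\Phi,\d}$ depends on $\d$ only through its values at the vertices of $C$ adjacent to $C^i$, the map $\d\mapsto\mu_C^{\Phi,\d}$ is a genuine $\FF_{C\cup C^o}$-measurable kernel. Combining these observations and quantifying over all cutsets yields the claim, and since the argument is symmetric in the two conditions both implications come at once.

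I expect the only real obstacle to be bookkeeping: keeping track of the identity $(C^i)^c=C\cup C^o$, verifying properness and the correct measurability of $\g^\Phi_{C^i}$, and checking that passing between a probability measure on $\O_0^V$ of ``product-with-frozen-boundary'' type and a conditional law on $\O_0^{C^i}$ loses nothing once one restricts test functions to those measurable with respect to $\s(\s_v:\,v\in C^i)$. No analysis or estimates are involved; the content is simply the DLR formalism of \cite{Ge88} applied to the specific specification $\g^\Phi$.
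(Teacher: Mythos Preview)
The paper does not give an explicit proof of this lemma; it merely presents it as an ``alternative characterization of Gibbs measures'' that the preceding lemma ``allows us to give,'' with the preceding lemma itself referred to \cite[Remark 1.24]{Ge88}. Your proposal is correct and supplies precisely the details the paper leaves implicit: you reduce to a single cutset via the preceding lemma, verify properness and $\FF_{(C^i)^c}$-measurability of $\g^\Phi_{C^i}$, and then invoke the standard equivalence between $\mu\g^\Phi_{C^i}=\mu$ and $\g^\Phi_{C^i}(\cdot\mid\d)$ being a version of the $\mu$-conditional distribution given $\FF_{(C^i)^c}$. This is the intended argument, so there is nothing to compare against---your write-up is simply more complete than what the paper offers.
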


Given an interaction potential $\Phi$, a cutset $C$ and a boundary condition $\d$, let $p_{C,o}^{\Phi,\d}$ denote the marginal distribution of the finite-volume Gibbs measure $\mu_C^{\Phi,\d}$ at the root $o.$ We now come to the key definition of a {\em robust phase transition}, which is said to occur if a ``plus'' boundary condition retains its influence on the root even when the interaction along the edges at the boundary is made arbitrarily small:
Given $\Phi$ and $u \in (0,1]$ and a cutset $C$ of $T$, let $\bar\Phi(u,C)$ be the potential which is $\Phi$ on edges in $C^i$ and $u \Phi$ 
on edges connecting $C$ to $C^i$. Let $\d_0$ be a boundary configuration which is all $0$, i.e., $(\d_0)_v = 0$ for all $v \in V$. 
Let $p_{C,o}^{u,\Phi,+}$ denote the marginal distribution at the root $o$ of the measure $\mu_C^{u,\Phi,+} := \mu_C^{\bar\Phi(u,C),\d_0}$.
In the following we will always denote the equidistribution on $\O_0$ by $\mathbf{1}$.

\begin{figure}
	\beginpgfgraphicnamed{pastedge}
		\begin{tikzpicture}[scale=0.8]
		
		\clip (-8.7,-1) rectangle (5,9);
		
		\node[dot] (o) at (2,0) {};
		
		\node[dot] (v) at (-2,2) {}
			edge		(o);
		\node[dot] (v1) at (6,2) {}
			edge 	(o);
			
		\node[dot] (w1) at (-6,4) {}
			edge 	(v);
		\node[dot] (w2) at (-2,4) {}
			edge 	(v);
		\node[dot] (w3) at (2,4) {}
			edge 	(v);
			
		\node[dot] (w11) at (-7,6) {}
			edge[dashed]  	(w1);
		\node[dot] (w12) at (-5,6) {}
			edge[dashed]  	(w1);
		
		\node[dot] (w21) at (-2,6) {}
			edge[dashed]  	(w2);	
		
		\node[dot] (w31) at (0,6) {}
			edge[dashed] 	(w3);	
		\node[dot] (w32) at (2,6) {}
			edge[dashed] 	(w3);	
		\node[dot] (w33) at (4,6) {}
			edge[dashed] 	(w3);	
			
	\draw[dotted] (-8,6) -- (5,6);
			
		\node[dot] (w111) at (-7.5,7) {}
			edge  	(w11);
		\node[dot] (w112) at (-6.5,7) {}
			edge 	(w11);
		
		\node[dot] (w121) at (-5.5,7) {}
			edge  	(w12);
		\node[dot] (w122) at (-4.5,7) {}
			edge 	(w12);	
			
		\node[dot] (w211) at (-2.5,7) {}
			edge 	(w21);	
		\node[dot] (w212) at (-2,7) {}
			edge 	(w21);	
		\node[dot] (w213) at (-1.5,7) {}
			edge 	(w21);	
		
		\node[dot] (w311) at (0,7) {}
			edge 	(w31);
			
		\node[dot] (w321) at (1.5,7) {}
			edge 	(w32);
		\node[dot] (w322) at (2.5,7) {}
			edge 	(w32);
			
		\node[dot] (w331) at (3.5,7) {}
			edge 	(w33);
		\node[dot] (w332) at (4.5,7) {}
			edge 	(w33);

		\node (delta) at (-8.5,6) {$\d$};
		\node (C) at (1,6.3) {$C$};
		\node (wurzel) at (2,-0.5) {$o$};
		\node (vau) at (-2,1.5) {$v$};
		
	\draw[dashed] (-8.6,0)--(-7.6,0);
	\node (dashed) at (-7,0) { $u \Phi$};
	
	\draw (-8.6,-0.5)--(-7.6,-0.5);
	\node (full) at (-7,-0.5) { $\Phi$};
		

%
%
%
%
%
%
%
%
%
%
%
%
			
		
		\end{tikzpicture}
	\endpgfgraphicnamed

	\label{fig:pastedge}
\end{figure}

\begin{defn}\label{rpt}
A generalized clock model on the tree $T$ is said to exhibit a {\em robust phase transition} for the interaction $\Phi$ if for every $u\in (0,1],$
$$\inf_C || p_{C,o}^{u,\Phi,+} - \mathbf{1} ||_{\infty} \neq 0,$$
where the infimum is taken over all cutsets $C$ of $T$.
\end{defn}


We will show that under a positivity assumption on the eigenvalues of the transfer matrix $M$, which is chosen such that $M$ is non-increasing, the existence of a robust phase transition is a geometric property of the underlying tree $T$. The decisive parameter in this regard is the {\em branching number}, which in some sense captures the average number of edges per vertex of the tree. It is defined by
\begin{equation}\label{br}
\br T := \sup \{ \l : \inf_C \sum_{v\in C} \l^{-|v|}>0 \},
\end{equation}
where the infimum is once more taken over all cutsets $C$. 

The branching number has been introduced  in this form by Lyons \cite{LyRW} and it can be interpreted as the exponential of the Hausdorff dimension of the boundary $\partial T$ of the tree which has been studied previously by Furstenberg \cite{Fu}. For percolation and symmetric root-biased random walks on trees it is known that there exist sharp thresholds for the existence of infinite clusters and positive recurrence respectively that are given by the branching number \cite{LyRW}. 

The definition of robust phase transitions goes back to a paper by \mbox{Pemantle} and Steif \cite{RPT}, in which they showed that under certain ``ferromagneticity'' assumptions, amongst others regarding the positivity of the Fourier coefficients of the Gibbs marginals, there exists a sharp threshold for the existence of such RPTs which is given by
$$\l_1 \br T  =1,$$
where $\l_1$ denotes the first non-trivial Fourier coefficient of the transfer operator. 
By establishing applicability of the results of Pemantle and Steif we prove that this threshold holds also for certain types of generalized clock models:

\begin{pro}\label{pro:rpt} 
Let $T$ be a tree and $M$ the transfer matrix of a generalized clock model with state space $\O_0 = \{0,1,...,q-1\}$. Assume that all eigenvalues of $M$ are non-negative and such that $M$ is non-increasing. Let $\l_1$ be the second largest eigenvalue of $M$. If
$$\l_1 \br T   > 1$$
then there is a robust phase transition and if
$$\l_1 \br T  <1$$
there is no robust phase transition. 
\end{pro}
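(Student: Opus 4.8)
The plan is to show that, under the stated hypotheses, the generalized clock model falls within the scope of the Pemantle--Steif analysis of robust phase transitions \cite{RPT}, with the role of their ``second eigenvalue of the transfer operator'' played by $\l_1$. Throughout one works with the tree recursion \eqref{rec} for the single-site marginals $p_{C,o}^{u,\Phi,+}$, written in terms of their discrete Fourier coefficients $(\a_j)_{j\in\Z_q}$, normalized so that $\a_0=1$. Applying the circulant transfer matrix along an edge multiplies $\a_j$ by the corresponding eigenvalue, and fusing the contributions of the children of a vertex amounts, after normalization, to a pointwise product of the corresponding densities, i.e.\ to a cyclic convolution of the Fourier coefficient sequences.

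First I would show that the cone $\CC:=\{\,p\in\M_1(\O_0):\hat p_j\ge 0\ \text{ for all }j\in\Z_q\,\}$ is invariant under the recursion. Multiplying $\a_j$ by the eigenvalue $\l_j\ge 0$ preserves non-negativity --- this is where the hypothesis that all eigenvalues of $M$ are non-negative is used --- and a cyclic convolution of non-negative sequences is again non-negative; the weakened boundary interaction $u\Phi$ likewise has a circulant transfer matrix $M_u$ with non-negative eigenvalues $\l_j(u)$, and $\l_j(u)\to 0$ for $j\neq 0$ as $u\downarrow 0$. Since the plus boundary condition starts the recursion at the Dirac mass $\d_0$, whose Fourier coefficients all equal $1$, every marginal $p_{C,o}^{u,\Phi,+}$ belongs to $\CC$. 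Positivity of the Fourier coefficients of the marginals is precisely the ``ferromagneticity'' input of \cite{RPT}; combined with the assumption that $M$ is non-increasing --- equivalently, that $\Phi$ is non-decreasing in the distance along the discrete circle --- which renders the recursion monotone in the natural order on $\CC$ with the all-$0$ configuration extremal, this is what lets the Pemantle--Steif machinery be carried over to the present setting.

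Next I would record the elementary fact that, when the first row $r$ of $M$ is symmetric and non-increasing in the distance to the diagonal, the Fourier mode $\l_1=\sum_k r_k\cos(2\pi k/q)$ is the largest among $\l_1,\dots,\l_{q-1}$; since all eigenvalues are non-negative, this identifies $\l_1=\max_{j\neq 0}\l_j$ with the second largest eigenvalue of $M$, in agreement with the phrasing of the statement. This follows by decomposing the unimodal symmetric sequence $r$ into a non-negative combination of indicators of centered blocks $\{-m,\dots,m\}$, which reduces the comparison to that of the Dirichlet-kernel values $\sum_{k=-m}^{m}\cos(2\pi jk/q)$ over $j\in\{1,\dots,q-1\}$, these being maximal at $j=1$. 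In particular it is the first Fourier mode that governs the transition, so it suffices to control $\a_1$.

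Finally, I would linearize the Fourier recursion about the equidistribution $(\a_j)=(1,0,\dots,0)$: to leading order the convolution cross-terms drop out, and for each $j\neq 0$ the recursion reads $\a_j\mapsto\l_j\sum_{w:\,v\to w}\a_j^{(w)}$. Iterating along a cutset $C$, the linearized value of $\a_1$ at the root is, up to the bounded factor coming from the single weakened boundary edge, of order $\sum_{x\in C}\l_1^{|x|}$, and by the definition \eqref{br} of the branching number one has $\inf_C\sum_{x\in C}\l_1^{|x|}>0$ if and only if $\l_1\br T>1$. Thus if $\l_1\br T>1$ the first Fourier mode is repelled from $0$ uniformly over cutsets and uniformly for small $u$, so $\inf_C\|p_{C,o}^{u,\Phi,+}-\mathbf 1\|_\infty>0$ for every $u\in(0,1]$ and there is a robust phase transition; while if $\l_1\br T<1$ then, by the eigenvalue ordering above, $\l_j\br T<1$ for all $j\neq 0$, every non-trivial mode is contracted and the marginals converge to $\mathbf 1$, so there is no robust phase transition. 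The main obstacle is to turn this linearized picture into a rigorous argument: one has to control the quadratic and higher-order convolution terms uniformly over the part of $\CC$ visited by the recursion for all small $u$, so that they neither quench the repulsion from $\mathbf 1$ in the supercritical regime nor destroy the contraction in the subcritical one. This is exactly the delicate estimate performed by Pemantle and Steif, and the purpose of the earlier steps is to verify that their standing hypotheses --- invariance of the positive Fourier cone, monotonicity, and extremality of the plus condition --- hold for every non-increasing circulant $M$ with non-negative eigenvalues, whereupon \cite{RPT} applies with effective coupling $\l_1$ and gives the stated threshold.
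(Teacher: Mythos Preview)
Your plan is essentially the paper's: both verify the hypotheses of Theorems~3.1 and~3.2 of \cite{RPT} via discrete Fourier analysis, using positivity of the Fourier modes under the recursion, the identification $\l_1=\max_{j\neq 0}\l_j$ for non-increasing $M$, and a linearization estimate (the paper's Lemma~\ref{lin}); the paper makes this concrete by introducing the norm $\|f\|_A=\sum_j|a_j(f)|$ and the linear functional $\mathcal{L}(f)=z_1 a_1(f)$ and checking the specific contraction and comparison inequalities that \cite{RPT} actually lists, rather than phrasing things in terms of invariance of a positive Fourier cone.

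One inaccuracy worth flagging: your claim that the weakened transfer matrix $M_u$ inherits non-negative eigenvalues from $M$ is false in general --- for $q=4$ and first row proportional to $(10.1,\,6,\,2,\,6)$ one has $\l_2(M)>0$ but $\l_2(M_{1/2})\propto\sqrt{10.1}-2\sqrt{6}+\sqrt{2}<0$ --- so the cone $\CC$ need not be invariant under the boundary step. This is harmless for the actual argument, since what \cite{RPT} requires (and what the paper verifies) is the $A$-norm contraction $\|Mf-\mathbf 1\|_A\leq\l_1\|f-\mathbf 1\|_A$ together with $\|r^u-\mathbf 1\|_A\to 0$ for the subcritical direction, and the inequality $a_1(f)\geq|a_j(f)|$ for \emph{non-increasing} $f$ (which $r^u$ is, and which the full-strength recursion preserves) for the supercritical direction; neither of these needs Fourier positivity of $r^u$ itself.
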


Remember that for suitably chosen eigenvalues for the transfer matrix $M$ we recover the Potts model and the standard-clock model with the potential $\Phi(i,j) = \cos\big( \frac{2\pi}{q}(i-j)\big)$ respectively from the generalized clock model. In both of these two models the same sharp threshold for the existence of a RPT as given in Theorem \ref{pro:rpt} holds. 

The existence of a robust phase transition always implies phase transition (Proposition \ref{pro:plus}). 
However for the Potts model it is known that the existence of regular phase transitions does not solely depend on the second largest eigenvalue and the existence of a regular phase transition does not imply the existence of a RPT. For the classical Heisenberg model it has been only conjectured that robust and non-robust phase transitions do coincide \cite{RPT}. 

We show that in the generalized clock model there is no equivalence of PT and RPT in general: For $q=4,5$ on the binary tree we provide the exact transition point between the two regimes PT$=$RPT and PT$\neq$RPT at criticality, i.e., for $\l_1 = 1/2$, in terms of the third largest eigenvalue $\l_2$. For $q=4$ we give the transition points for every $\l_1 \in (0, 1/2)$ (see Figure \ref{PTvsRPT}).

\begin{thm}\label{thm:rpt45}
Consider the generalized $q$-state clock model on the binary rooted tree. For $q=4$ and $q=5$ there exists a non-empty region for the eigenvalues $\l_1, \l_2$ of the transfer matrix $M$, such that the spin system exhibits a phase transition but no robust phase transition. 
\end{thm}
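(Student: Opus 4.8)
The plan is to reduce the problem to a concrete two-dimensional analysis of the tree recursion written in Fourier coordinates, following the program outlined in the introduction. First, I would recall from Proposition \ref{pro:plus} that on the binary tree it suffices to track the recursion for the plus boundary condition and, because the transfer matrix $M$ is circulant with $G=\Z_q$-symmetry and reflection symmetry, the marginal $p_{C,o}$ of a finite-volume Gibbs measure is determined by a reduced set of real Fourier coefficients. For $q=4$ these are $(\a_1,\a_2)$ with $\a_3=\a_1$, and for $q=5$ they are $(\a_1,\a_2)$ with $\a_3=\a_2,\ \a_4=\a_1$; here $\a_j$ is the $j$-th Fourier coefficient of the single-site marginal, and $\mathbf 1$ corresponds to $\a_1=\a_2=0$. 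Writing out the sum-product (tree) recursion \eqref{rec} in these coordinates on the binary tree gives a polynomial map $F_{\l_1,\l_2}:\R^2\to\R^2$ fixing the origin, whose linearization at the origin is the diagonal matrix $\mathrm{diag}(2\l_1,2\l_2)$ by the usual ``branching number times eigenvalue'' heuristic (so at criticality $\l_1=1/2$ the first coordinate is neutral, the second is either contracting or expanding according to whether $\l_2<1/2$ or $\l_2>1/2$).

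Second, I would use Proposition \ref{pro:rpt}: at $\l_1 = 1/2$ (so $\l_1\br T = 1$ on the binary tree) there is no robust phase transition, and for $\l_1$ slightly below $1/2$ there is likewise no RPT. So to exhibit the desired region it remains to show that, within this RPT-free parameter range, there is nonetheless a genuine phase transition for a non-empty open set of $(\l_1,\l_2)$. By Proposition \ref{pro:plus}, a phase transition occurs precisely when the plus-boundary marginals do \emph{not} converge to $\mathbf 1$ under the recursion, equivalently when the orbit under $F_{\l_1,\l_2}$ of the relevant starting point (a Dirac mass, with known Fourier coefficients close to those of a point mass) does not converge to $0$. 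The key point is that even though the origin is (weakly) attracting along the $\a_1$-direction at criticality, the map $F_{\l_1,\l_2}$ can have a second fixed point $(\a_1^*,\a_2^*)\neq 0$ with positive coordinates; the presence of such a stable nontrivial fixed point, together with monotonicity of the recursion in the boundary condition (the stochastic-ordering property invoked before Proposition \ref{pro:plus}), forces the plus-marginals to stay bounded away from $\mathbf 1$, hence PT holds.

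Third, the heart of the argument is the count of solutions of the fixed-point equation $F_{\l_1,\l_2}(\a)=\a$. Since $F$ is polynomial of low degree in two variables, I would eliminate one variable (say solve the $\a_2$-equation for $\a_2$ as a function of $\a_1$, or use resultants) to reduce to a single polynomial equation $P_{\l_1,\l_2}(\a_1)=0$, and then count its positive roots using Descartes' rule of signs or explicit discriminant/sign-of-coefficient conditions. This is where I would pin down, for $q=4$, the exact transition curve in the $(\l_1,\l_2)$-plane for each $\l_1\in(0,1/2)$, and for $q=5$ the transition point at $\l_1=1/2$ — namely the locus where a nontrivial fixed point is born. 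One then checks that on one side of this curve the only fixed point reachable is the origin (no PT), and on the other side a stable nontrivial fixed point exists and is attracting for the relevant initial condition (PT without RPT, since we are in the $\l_1\br T\le 1$ regime). Finally I would verify the region is non-empty and open — e.g. by exhibiting one explicit pair $(\l_1,\l_2)$, or by a continuity/transversality argument at the computed transition curve.

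**Main obstacle.** The delicate step is not writing down the polynomial map but controlling the \emph{global} dynamics: knowing that a nontrivial fixed point exists is not enough — one must show the plus-boundary orbit actually converges to it (or at least stays away from $0$), which requires the monotonicity in boundary conditions plus an argument that the nontrivial fixed point dominates $\mathbf 1$ in the stochastic order and is attracting from above. Equivalently, one must rule out the possibility that, despite a second fixed point, the plus-orbit still slides into the origin along the neutral $\a_1$-direction. Handling the nonlinear terms near the neutral direction at criticality (the ``repelled from the equidistribution'' mechanism, but here in reverse — we want it to \emph{not} be repelled and yet not collapse) is the subtle analytic point, and is presumably why the authors restrict to $q=4,5$ and to the binary tree, where the fixed-point polynomial is explicit enough to analyze by hand.
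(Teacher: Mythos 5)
Your framework is essentially the paper's: reduce the sum--product recursion to a polynomial map in the Fourier coordinates $(\a_1,\a_2)$ on the binary tree, count nontrivial fixed points via elimination and discriminant (or explicit solving for $q=4$), and combine with Proposition \ref{pro:rpt} on the RPT side. However, you misidentify what the paper actually has to control. Your ``main obstacle'' --- showing that the plus-boundary orbit does not slide into the origin along the neutral $\a_1$-direction, i.e.\ controlling the \emph{global dynamics} of $F_{\l_1,\l_2}$ --- is simply not present in the paper's argument, and you should not try to attack the problem that way. The paper instead invokes the boundary-law correspondence (Zachary, and Georgii Thm.\ 12.6, as quoted right after \eqref{rec}): extremal Gibbs measures on trees are Markov chains, and tree-indexed Markov-chain Gibbs measures are in bijection with boundary laws, i.e.\ with fixed points of the normalized recursion. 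Consequently, once one has found a \emph{second} solution $(\a_1^*,\a_2^*)\neq(0,0)$ of the fixed-point system that corresponds to a genuine probability vector, phase transition follows immediately and statically --- there is no need to prove that the plus-boundary iterates converge to it, or to rule out collapse along a neutral direction. This is why the paper can conclude directly after exhibiting the quartic $q_{\l_2}$ (for $q=5$) or the explicit pair $\a_2=\frac{3\l_2-1}{2(\l_2+\l_2^2)}$, $\a_1=\pm\sqrt{2\l_2\a_2-4\l_2^2\a_2^2}$ (for $q=4$) and checking positivity of the resulting vectors; Proposition \ref{pro:plus} is used to set up the framework but is not what delivers the existence of PT here.

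Two smaller points. First, you write the linearization of the squared-and-normalized map as $\mathrm{diag}(2\l_1,2\l_2)$; on the binary tree with two children this is correct, but make sure the degree-$k$ product structure in \eqref{rec} is kept explicit, because the nonlinear cross terms (e.g.\ the $\a_1\a_2\l_1\l_2$ and $\a_1^2\l_1^2$ contributions to the $\a_2$-equation for $q=4$) are exactly what create the saddle-node curve. Second, for the claim that the region is \emph{non-empty} and not just a curve, the paper must (and does) move off the critical line $\l_1=1/2$: for $q=4$ it derives the full transition curve $\l_1=4\l_2\frac{1-\l_2}{(1+\l_2)^2}$ valid for all $\l_1\in(0,1/2]$, and for $q=5$ it uses the implicit function theorem at the Potts solutions $a_\pm$ to produce an open neighborhood; Proposition \ref{pro:rpt} strictly gives no-RPT only for $\l_1\br T<1$, so working at $\l_1<1/2$ is the clean way to avoid any argument about the borderline case. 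Your ``continuity/transversality at the transition curve'' suggestion is the right instinct, but be explicit that this is needed to get an open region where both (a) a nontrivial boundary law exists and (b) $\l_1<1/2$.
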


\section{RPT is a geometric property: Applicability of Pemantle-Steif theory for the generalized clock model on possibly irregular trees}
An important property of Gibbs measures for spin models on trees is the recursive nature of their marginal distributions $p_{C,v}^{\Phi,\d}$: Let $w_1,...,w_k$ be the children of $v$. Then
\begin{equation}\label{rec}
p_{C,v}^{\Phi,\d}(i) = \frac{1}{Z_{C,v}^{\Phi,\d}} \prod_{l=1}^k \left( \sum_{j=0}^{q-1} M(i,j) p_{C,w_l}^{\Phi,\d}(j) \right),
\end{equation}
where
\begin{equation*}
Z_{C,v}^{\Phi,\d} = \sum_{k=0}^{q-1} \prod_{l=1}^k \left( \sum_{j=0}^{q-1} M(k,j) p_{C,w}^{\Phi,\d}(j) \right)
\end{equation*}
is a normalizing constant \cite[Lemma 2.2]{RPT}. We like to note that equation \eqref{rec} is the defining property of so-called {\em boundary laws} or {\em entrance laws} \cite{Ge88}, \cite{Z83}. 
It is known that every Gibbs measure which is a tree-indexed Markov chain can be uniquely represented in terms of a boundary law (up to a positive pre-factor), and conversely that every boundary law can be used to construct a Gibbs measure which is a Markov chain \cite{Z83}. Furthermore, the extremal elements of the set of Gibbs measures are necessarily Markov chains (see \cite[Theorem 12.6]{Ge88}), and therefore spin models on trees exhibit a phase transition if and only if there exists more than one boundary law.
Note that we always have the free solution, i.e., the equidistribution $\mathbf{1}$. 
For a recent paper showing the connection between gradient Gibbs measures and periodic boundary laws for models with non-compact local state space, see \cite{KuSc16}.

In the following we will work towards an equivalent definition of phase transition.
For every neighboring pair of vertices $v, w \in V$ we get two infinite trees when we remove the edge $\{ v, w \}$ from $E$. Let us denote the tree containing $v$ by $T_v$ and the other by $T_w$. 
For any cutset $C_n$ on the whole tree $T$ we define cutsets $C_n^v:= C_n \cap T_v$ and $C_n^w:= C_n \cap T_w$ on the two subtrees $T_v, T_w$. 
For any configuration $\d \in \O$ we introduce boundary conditions $\d_v$ and $\d_w$ on $T_v$ and $T_w$ 
by setting $\d_v := \d \cap T_v$ and $\d_w := \d \cap T_w$ respectively. 
Note that in the following we denote the marginal distributions of the finite-volume Gibbs measures on the infinite subtrees $T_v$ and $T_w$ by $p_{C_n^v, v}^{\Phi,\d_v}$ and  $p_{C_n^w, w}^{\Phi, \d_w} $ respectively. These are not to be confused with the marginals of the finite-volume Gibbs measure on the whole tree $T$. 
Let the {\em inner boundary} of a subset $\L \subset V$ be defined by $\partial^i \L := \{ v \in \L: \exists w \in \partial \L \mbox{ with } \{v,w\} \in E \}$. 

\begin{pro}\label{pro:rep}
Let $(\a_i)_{i\in V}$ be a family of measures on $\O_0$ and let $\{ C_n\}_{n\in\N}$ be a sequence of cutsets that exhausts $V$. Suppose that for every $v\in V$ and every $\d \in \O$ the marginal distribution $p_{C_n^{v}, v}^{\Phi, \d_v}$ 
converges to the distribution $\a_v$. 
Then the thermodynamic limit of the finite-volume Gibbs measure, $\mu := \lim_{n\to\infty} \g_{C_n^i}( \cdot \mid \d)$, exists for any choice of $\d$ and is given by
\begin{equation*}
\mu (\sigma_\L = \o_\L) = \frac{1}{Z_\L} \prod_{k \in \partial^i \L} \a_k(\o_k) \prod_{b \subset \L} M_b(\o_b)
\end{equation*} 
where $\L \SS V$ is any finite connected set, $\o_\L \in \O_0^\L$, and $Z_\L$ is a normalizing constant.
\end{pro}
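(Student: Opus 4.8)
The plan is to verify the claimed formula by checking its consistency with the recursion \eqref{rec} and then using the convergence hypothesis together with a compactness/diagonal argument to identify the limit. First I would fix a configuration $\d \in \O$ and a finite connected set $\L \SS V$, and take $n$ large enough that $C_n$ exhausts $\L$ in the sense that $\L \subset (C_n^i)$ and moreover every vertex of $\partial^i \L$ still has a child whose subtree-cutset $C_n^w$ is nonempty. The key observation is that the finite-volume Gibbs measure $\g_{C_n^i}(\cdot \mid \d)$, restricted to $\sigma_\L$, can be written by integrating out the spins in $C_n^i \setminus \L$: because $T$ is a tree, removing $\L$ disconnects the interior into a finite connected piece and one ``pendant'' subtree hanging off each $w$ with $v \to w$, $v \in \partial^i \L$. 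Summing over the spins in each pendant subtree produces exactly the marginal $p_{C_n^w, w}^{\Phi, \d_w}$ (this is precisely the content of the recursion \eqref{rec} iterated down the subtree, i.e.\ the defining property of boundary laws). Hence
\begin{equation*}
\g_{C_n^i}(\sigma_\L = \o_\L \mid \d) = \frac{1}{Z_\L^{(n)}} \prod_{b \subset \L} M_b(\o_b) \prod_{v \in \partial^i \L} \ \prod_{w : v \to w,\, w \notin \L} \left( \sum_{j=0}^{q-1} M(\o_v, j)\, p_{C_n^w, w}^{\Phi, \d_w}(j) \right),
\end{equation*}
up to absorbing the $M$-factors along edges inside $\L$ correctly. (One has to be a little careful about edges joining $\L$ to vertices of $C_n^i$ that are \emph{not} children of $\partial^i\L$-vertices, i.e.\ the parent of the root of $\L$ if $o \notin \L$; that contributes one more factor of the same type, which is why the formula is stated for connected $\L$ and why the product is over $k \in \partial^i\L$ without reference to the direction of the edge.)

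Next I would take $n \to \infty$. By hypothesis each $p_{C_n^w, w}^{\Phi, \d_w} \to \a_w$, and since $\O_0$ is finite, convergence of these finitely many (for fixed $\L$) marginals is just convergence of finitely many real vectors; so the product above converges termwise. The normalizing constants $Z_\L^{(n)}$ converge to the corresponding normalization $Z_\L$ of the limiting expression, and this limit is strictly positive because all the $M(i,j)$ are strictly positive (being $e^{-\Phi(i,j)}$) and each $\a_w$ is a probability measure, so $\sum_j M(\o_v,j)\a_w(j) > 0$. Therefore $\g_{C_n^i}(\sigma_\L = \o_\L \mid \d)$ converges, for every finite connected $\L$ and every $\o_\L$, to
\begin{equation*}
\frac{1}{Z_\L} \prod_{k \in \partial^i \L} \a_k(\o_k) \prod_{b \subset \L} M_b(\o_b),
\end{equation*}
after rewriting the pendant-edge factors $\sum_j M(\o_v,j)\a_w(j)$: these are what get reorganized, together with the boundary-law property of the $\a$'s, into the single product $\prod_{k\in\partial^i\L}\a_k(\o_k)$. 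Convergence of all finite-dimensional marginals gives weak convergence of the measures $\g_{C_n^i}(\cdot \mid \d)$ on the compact product space $\O_0^V$ to a well-defined probability measure $\mu$, whose cylinder probabilities are exactly the stated expression; consistency of this family over nested $\L$ is automatic since it arises as a limit of genuine probability measures (or can be checked directly from the formula).

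The main obstacle I anticipate is bookkeeping rather than conceptual: correctly matching the product $\prod_{b\subset\L} M_b(\o_b)$ over edges \emph{inside} $\L$ against the edge-factors that appear when one sums out the pendant subtrees, and verifying that the pendant-subtree sums $\sum_j M(\o_w,j) p^{\Phi,\d_w}_{C_n^w,w}(j)$ telescope — via the recursion \eqref{rec} applied inside each pendant subtree — into the clean product $\prod_{k\in\partial^i\L}\a_k(\o_k)$ in the limit, with the edge from $\partial^i\L$ to the pendant subtree's root absorbed consistently. One must also make sure that the hypothesis is invoked for the subtree-marginals $p^{\Phi,\d_w}_{C_n^w,w}$ (not the whole-tree marginals), which is exactly the object the tree surgery produces; the proposition is stated with this in mind. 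Once the combinatorial identification is in place, the analytic part (termwise limits, positivity of the normalizer, weak convergence via finite-dimensional distributions on a compact space) is routine.
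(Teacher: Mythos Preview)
Your approach is correct and rests on the same tree-surgery factorization as the paper, but the paper organizes it differently: it argues by induction on $|\L|$, starting from a two-point set $\L=\{v,w\}$ and adjoining one boundary vertex at a time. At each inductive step a single edge $\{v,w\}$ is cut; on the $w$-side one sums out everything except $\o_w$ to obtain the subtree marginal $p_{C_n^w,w}^{\Phi,\d_w}(\o_w)$ itself (not the convolved quantity $\sum_j M(\o_v,j)\,p_{C_n^w,w}^{\Phi,\d_w}(j)$), and then passes to the limit $\a_w(\o_w)$ directly by hypothesis. The remaining piece is covered by the induction hypothesis applied on the subtree $T_v$. This sidesteps exactly the ``reorganization'' you anticipate: one never has to collapse a product $\prod_{w}\sum_j M(\o_v,j)\,p_{C_n^w,w}^{\Phi,\d_w}(j)$ over several pendant subtrees into a single $\a_v(\o_v)$, because only one pendant subtree is peeled off per step.

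If you want to make your direct argument go through cleanly, the simplest fix is to apply the recursion \eqref{rec} \emph{before} taking the limit rather than after: for $v\in\partial^i\L$ with unique neighbour $u\in\L$, the product over all neighbours $w\notin\L$ of $v$ of the pendant factors is, up to normalization, precisely $p_{C_n^v,v}^{\Phi,\d_v}(\o_v)$ on the subtree $T_v$ obtained by removing $\{v,u\}$, and this converges to $\a_v(\o_v)$ by hypothesis --- no separate ``boundary-law property of the $\a$'s'' needs to be invoked. When $v\in\partial^i\L$ has more than one neighbour inside $\L$ this identification is not available in one shot, which is another reason the paper's one-vertex-at-a-time induction is the tidier packaging.
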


\begin{proof}
The proof is done by induction on $|\L|$. 
First, let $\L = \{ v, w \}$ with $v \sim w$. Remove the edge connecting $v$ to $w$ from the tree to get the two subtrees $T_v$ and $T_w$ which contain $v$ and $w$ respectively. Note that by assumption
$$p_{C_n^v, v}^{\Phi, \d_v} (\o_v) = 
\frac{\sum_{\o_{C_n^{v,i}\setminus v}} \exp(-H_{C_n^{v,i}} (\o_v, \o_{C_n^{v,i}\setminus v}, \d^v_{(C_n^{v,i})^c} ) ) }{\sum_{\o_v, \o_{C_n^{v,i}\setminus v}} \exp(-H_{C_n^{v,i}} (\o_v, \o_{C_n^{v,i}\setminus v}, \d^v_{(C_n^{v,i})^c}) )}
\to \a_v(\o_v)$$
for $n \to \infty$. 
Let $g$ be some test-function on $\L = \{ v,w \}$. Then

\begin{equation*}\begin{split}
& \g_{C_n} (g(\o_v, \o_w) \mid \d^v, \d^w) \\
= & \sum_{\o_v, \o_w} \frac{1}{Z_{C_n}^\d} g(\o_v, \o_w) M_{vw}(\o_v, \o_w)   \\
& \qquad \times \left(\sum_{\o_{C_n^{v,i} \setminus \{v\} }} \exp \left(-H_{C_n^{v,i}} \left(\o_v, \o_{C_n^{v,i}\setminus v}, \d^v_{(C_n^{v,i})^c} \right) \right) \right) \\
& \qquad \qquad \times  \left(\sum_{\o_{C_n^{w,i} \setminus \{w\} }} \exp \left(-H_{C_n^{w,i}} \left(\o_w, \o_{C_n^{w,i}\setminus w}, \d^w_{(C_n^{w,i})^c} \right) \right) \right) \\
= & \frac{1}{Z_{v,w}} \sum_{\o_v, \o_w}  g(\o_v, \o_w) M_{vw}(\o_v, \o_w)  \, p_{C_n^v, v}^{\Phi, \d_v} \,  p_{C_n^w, w}^{\Phi, \d_w}      \\
\to  & \frac{1}{Z_{v,w}}  \sum_{\o_v, \o_w}  g(\o_v, \o_w) M_{vw}(\o_v, \o_w) \, \a_v(\o_v) \, \a_w(\o_w)
\end{split}\end{equation*}
for $n \to \infty$. 
This proves the claim for volumes $\L$ that consist of only two sites. 

Now suppose the claim holds for any sub-volume $\D \subset V$ with $|\D| = n$. Let $\L := \D \cup \{ w \}$ with $w \in \partial \D$ and $v:= \partial w \cap \D$.  Let $g$ be some test-function on $\L$. Then
\begin{equation*}\begin{split}
& \g_{C_n} (g (\o_\L) \mid \d^v, \d^w ) = \g_{C_n} (g (\o_\D, \o_j) \mid \d^v, \d^w ) \\
= & \sum_{\o_\D, \o_j} \frac{1}{Z_{C_n}^\d} g(\o_\D, \o_w) M_{vw} (\o_v, \o_w)  \\
& \qquad \times \left(  \sum_{\o_{C_n^{v,i}\setminus\D}}  
 \left(\exp\left(  -H_{C_n^{v,i}} (\o_\D, \o_{C_n^{v,i}\setminus\D}), \d_{(C_n^{v,i})^c} \right)\right) \right)\\
& \qquad   \qquad \times \left(  \sum_{\o_{C_n^{w,i}\setminus \{w\} }}      \left(\exp\left(  -H_{C_n^{v,i}} (\o_w, \o_{C_n^{v,i}\setminus w}), \d_{(C_n^{w,i})^c} \right)\right) \right)\\
= & \frac{1}{Z_{C_n}^\d}  \sum_{\o_\D, \o_j} g(\o_\D, \o_j)   M_{vw} (\o_v, \o_w)  \,   \g_{C_n^{v,i}}(\o_\D \mid \d_v) \, p_{C_n^w,w}^{\Phi, \d_w} \\
\to  & \frac{1}{Z_\L}   \sum_{\o_\D, \o_j}   g(\o_\D, \o_j)    M_{vw} (\o_v, \o_w)  \,   \left( \prod_{k \in \partial^i \D \setminus \{ w \}} \a_k(\o_k) \prod_{b \subset \D} M_b(\o_b) \right)   \a_w(\o_w).
\end{split}\end{equation*}
The last equation follows from the induction assumption. Note that $\g_{C_n^{v,i}} (\cdot \mid \d_v)$ is the measure on the subtree $T_v$ and not on the whole tree. Hence the inner boundary of $\D$ in $T_v$ is not containing $w$. 
This completes the proof.  
\end{proof}

Note that Proposition \ref{pro:rep} readily implies that there is no phase transition under the given assumptions: Every extremal Gibbs measure $\mu$ can be given as a thermodynamic limit of a finite-volume Gibbs measure prepared with some boundary configuration which is typical for $\mu$ (see \cite[Proposition 2.23]{EnFeSo93}). As these limits are assumed to be the same for {\em any} boundary configuration $\o$, we find that there can only exist one extremal Gibbs measure. 
Therefore the spin model exhibits a phase transition if and only if the recursion equation \eqref{rec} has more than one solution. It is even enough to study the recursion under ``plus'' boundary condition:

\begin{pro}\label{pro:plus}
In the generalized clock model a phase transition occurs if and only if there exists a vertex $v$ and a sequence of cutsets $\{ C_n \}_{n\in\N}$ such that
\begin{equation*}
\inf_n \Vert p_{C_n,v}^{\Phi, \d_0} - \mathbf{1} \Vert_\infty \neq 0. 
\end{equation*}
\end{pro}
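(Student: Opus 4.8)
The plan is to prove the two implications separately; the ``if'' direction is soft, whereas the ``only if'' direction rests on a stochastic-monotonicity property of the model.

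\emph{Sufficiency.} Suppose there are a vertex $v$ and an exhausting sequence of cutsets $(C_n)$ with $\inf_n \Vert p_{C_n,v}^{\Phi,\delta_0}-\mathbf 1\Vert_\infty =: \varepsilon > 0$, so $\Vert p_{C_n,v}^{\Phi,\delta_0}-\mathbf 1\Vert_\infty \ge \varepsilon$ for every $n$; recall that $p_{C_n,v}^{\Phi,\delta_0}$ is the recursion output at $v$, i.e.\ the marginal at $v$ of the finite-volume Gibbs measure on the subtree $T_v$ with zero boundary values on $C_n\cap T_v$. Since $\O_0$ is finite, the set of probability measures on the configurations of $T_v$ is weak-$\ast$ compact, so these measures admit a subsequential weak limit $\mu^+$; by the usual argument---each of them satisfies $\mu = \mu\,\gamma^\Phi_{C^i}$ tested against any local $f$ with $\mathrm{supp}\,f\subset C^i$ (for $n$ large), and this is preserved in the limit since $\gamma^\Phi_{C^i}(f\mid\cdot)$ is a local function---$\mu^+$ is a Gibbs measure on $T_v$, and its marginal at $v$, a subsequential limit of $p_{C_n,v}^{\Phi,\delta_0}$, lies at distance $\ge \varepsilon$ from $\mathbf 1$. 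On the other hand, since $M$ is stochastic, $\mathbf 1$ solves \eqref{rec}, giving a Gibbs measure $\mu^0$ on $T_v$ all of whose single-site marginals are uniform (by the $\Z_q$-symmetry, or from the formula in Proposition~\ref{pro:rep}). Hence $\mu^0 \ne \mu^+$, so $T_v$, and hence also $T$ (extend a non-trivial boundary law from $T_v$ to $T$), carries more than one Gibbs measure, i.e.\ the model on $T$ has a phase transition.

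\emph{Necessity.} I would argue by contraposition: assume that for every vertex $v$ and every exhausting sequence of cutsets $(C_n)$ one has $\inf_n \Vert p_{C_n,v}^{\Phi,\delta_0}-\mathbf 1\Vert_\infty = 0$, and show that the Gibbs measure is unique. The engine is a \emph{domination lemma}: for every cutset $C$, every $v\in C^i$ and every boundary condition $\delta$,
\begin{equation*}
\Vert p_{C,v}^{\Phi,\delta}-\mathbf 1\Vert_\infty \le \Vert p_{C,v}^{\Phi,\delta_0}-\mathbf 1\Vert_\infty .
\end{equation*}
Granting it, fix any nested exhausting sequence $(C_n)$ of $T$. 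For each $v$ the hypothesis gives $\inf_n \Vert p_{C_n,v}^{\Phi,\delta_0}-\mathbf 1\Vert_\infty = 0$, and this upgrades to $\lim_n \Vert p_{C_n,v}^{\Phi,\delta_0}-\mathbf 1\Vert_\infty = 0$: were the $\limsup$ some $\eta > 0$, the subsequence of those $n$ with $\Vert p_{C_n,v}^{\Phi,\delta_0}-\mathbf 1\Vert_\infty \ge \eta/2$ would still be a nested exhausting sequence of cutsets, with infimum $\ge \eta/2$, contradicting the hypothesis. The domination lemma then gives $\Vert p_{C_n^v,v}^{\Phi,\delta_v}-\mathbf 1\Vert_\infty \le \Vert p_{C_n^v,v}^{\Phi,\delta_0}-\mathbf 1\Vert_\infty \to 0$ for every $v$ and every $\delta$. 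By Proposition~\ref{pro:rep}, applied with $\alpha_v = \mathbf 1$ for all $v$, the thermodynamic limit is then the same for every boundary condition, so the Gibbs measure is unique.

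\emph{The main obstacle.} The heart of the matter is the domination lemma, which is the stochastic monotonicity referred to in the introduction---the analogue for the generalized clock model of the classical sandwiching of an arbitrary boundary condition between constant ones in the Ising and Potts models. I would obtain it by putting, for each $c\in\O_0$, the partial order on $\O$ generated by single-site moves that decrease $d(c,\cdot)$, and showing that the tree recursion \eqref{rec} is stochastically monotone for each of these $q$ orders; on a tree this reduces to the monotonicity of the single-site conditional $\propto \prod_w M(\,\cdot\,,\sigma_w)$ in each neighbouring spin, which is forced by $M$ being non-increasing (so the pair potential is non-decreasing in the circular distance) together with the non-negativity of the eigenvalues of $M$. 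Monotonicity in all $q$ orders then sandwiches $p_{C,v}^{\Phi,\delta}(c)$ between $\min_i p_{C,v}^{\Phi,\delta_0}(i)$ and $\max_i p_{C,v}^{\Phi,\delta_0}(i)$ for every $c$ (the constant-$c$ boundary condition being a $\Z_q$-rotate of $\delta_0$), which is the lemma. The delicate part is checking the monotonicity of the single-site conditional, i.e.\ the inequalities on the ratios $M(i,\,\cdot\,)/M(i',\,\cdot\,)$ imposed by the non-increasing hypothesis; a tempting alternative---comparing the discrete Fourier coefficients of $p_{C,v}^{\Phi,\delta}$ and $p_{C,v}^{\Phi,\delta_0}$ directly through \eqref{rec}, using $\widehat{\delta_0}\equiv 1$---founders on the normalizing constant in \eqref{rec}, which destroys the naive coefficientwise inequality, so the order-theoretic (coupling) route seems the right one.
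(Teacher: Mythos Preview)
Your sufficiency argument is fine and standard.

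For necessity, the overall structure---a domination lemma followed by Proposition~\ref{pro:rep}---is the right shape, but your proposed proof of the domination lemma has a genuine gap. The partial order on $\Omega_0$ generated by ``move closer to $c$'' is \emph{not a lattice} once $q\ge 5$: for $q=5$ centred at $0$, the elements $2$ and $3$ each lie below both $1$ and $4$, while $1$ and $4$ are incomparable, so $\{2,3\}$ has two minimal upper bounds and no join. The Holley/FKG machinery you implicitly invoke needs a lattice; without one, ``monotonicity of the single-site conditional in each neighbouring spin'' does not yield stochastic domination of the full marginal, and the sandwiching you want does not follow. Even for $q=4$, where the order is the diamond lattice, the Holley condition on $M$ would still have to be checked directly, and ``$M$ non-increasing with non-negative eigenvalues'' is not obviously sufficient for it.

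The paper does not write out a proof either, but it points explicitly to the route you dismissed: the argument is declared analogous to \cite[Proposition~1.4]{RPT} for the rotator model, with the crucial input being the positivity of the Fourier coefficients of the transfer vector $r$ and of the marginals $p_{C_n,v}^{\Phi,\delta_0}$ under the recursion (equations~\eqref{eq:rpositivity} and~\eqref{eq:foupos}). Your objection about the normalizing constant is valid against a \emph{naive} coefficientwise inequality, but the Pemantle--Steif argument is not the naive one; the Fourier positivity of the interaction is precisely the hypothesis under which Ginibre-type correlation inequalities hold for $\mathbb{Z}_q$-symmetric models, and these play the role that FKG plays for Ising. So the Fourier route is the one to pursue here; the order-theoretic coupling route does not extend to general $q$.
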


Proposition \ref{pro:plus} also holds for the Ising, Potts and continuous rotator models. The proof for the generalized clock model is analogous to the one for the rotator model \cite[Proposition 1.4]{RPT}. The crucial property in these models is the positivity of the Fourier coefficients of the marginal distributions $p_{C_n, v}^{\Phi, \d_0}$ \eqref{eq:rpositivity} and of the transfer operator $r$ \eqref{eq:foupos} which we will discus later. \\

Observe that the r.h.s. of \eqref{rec} is a convolution of the different distributions of the children over the vectors in the circular matrix representing the interaction, 
followed by an ordinary product. We will make use of consequences of this structure regarding monotonicity below. 

In the following we will show that in the generalized clock model, where we assume that the eigenvalues of the transfer matrix $M$ are restricted by 
$$1\geq \l_j > 0, \quad \forall j\in \{0,...,q-1\},$$
and such that $M$ is non-increasing,
the existence of a RPT is determined by the branching number of the underlying tree and the second largest eigenvalue of the transfer matrix. 

To understand the recursion \eqref{rec} for the Gibbs marginals, it is important to note that it preserves the natural class of symmetric probability vectors 
which have positive Fourier (cosine-)coefficients, as we will explain now. \\
By $P(\O_0/0)$ we denote the vectors on $\O_0$ that are symmetric to $0$, i.e., $f(j)=f(k)$ if $d(0,j) = d(0,k)$ for any $f \in P(\O_0/0)$. Note that $\dim (P(\O_0/0))=\lfloor \frac{q}{2} \rfloor+1$. It will be convenient to work with the non-normalized basis $\mathcal{B}=\lbrace \phi_j \rbrace_{j=0,...,\lfloor\frac{q}{2} \rfloor}$, where 
$$\phi_j(k) := \cos (2\pi jk/q), \quad k \in \{0,1,...,q-1 \}.$$ 
This basis has the properties 
\begin{equation}\label{q}
\phi_i \phi_j = \frac{1}{2} \phi_{i+j} + \frac{1}{2} \phi_{i-j}
\end{equation} 
and
\begin{equation}\label{star}
\phi_i * \phi_j = z_j \d_{ij} \phi_j, 
\end{equation}
where $\phi_i *\phi_j(k) := \sum_l \phi_i(k-l)\phi_j(l)$ and $z_j = \sum_{k\in E} \phi_j^2(k)$. We introduce a new norm $||\cdot||_A$ on the space of symmetric vectors $P(\O_0/0)$, which is defined as the sum of the absolute values of the Fourier coefficients, i.e., 
$$||f||_A:= \sum_{j=0}^{\lfloor \frac{q}{2} \rfloor} |a_j(f)|,$$ 
where $a_j(f):= z_j^{-1}\langle f,\phi_j\rangle$. 

For any two distributions $f,g \in P(\O_0/0)$, we clearly have $fg \in P(\O_0/0)$ and 
$$||fg||_A \leq ||f||_A ||g||_A.$$
This submultiplicativity can be seen by developing $f$ and $g$ w.r.t. $\mathcal{B}$ and using property \eqref{q}.
As any $f\in P(\O_0/0)$ can be developed w.r.t. $\mathcal{B}$, property \eqref{star} gives us 
$$a_j(f*g) = z_j a_j(f)a_j(g)  \quad \forall f,g \in P(\O_0/0).$$
Note that 
\begin{equation}\label{eq:rpositivity}
a_j(Me_1) = a_j (r) = \l_j / z_j >0
\end{equation}
for all $j\geq 0$. 

We define the class $P_+(\O_0/0) \subset P(\O_0/0)$ as the smallest class of symmetric probability vectors which contains the canonical basis vector $e_1 = (1,0,...,0)$ and which is closed under the recursion given by \eqref{rec}. Note that this class contains all possible marginal distributions of any finite-volume Gibbs measure under ``plus'' boundary condition. By the recursion formula it follows, that all the Fourier coefficients of any marginal $g\in P_+(\O_0/0)$ of a Gibbs measure are strictly positive:
\begin{equation}\label{eq:foupos}
a_j(g) >0 \quad \forall j\in \{0,...,q-1\}.
\end{equation}
Remember that this is only the case since we have chosen the eigenvalues of the transfer matrix $M$ to be positive. 
Another important property is that the Fourier coefficients of higher order are dominated by the first one: Let $f\in P_+(\O_0/0)$. Then
$$\left\lvert 1 + 2\sum_{k=0}^{\floor*{\frac{q}{2}}} f(k) \cos \left( \frac{2\pi}{q}jk \right) \right\rvert 
\leq  1 + 2\sum_{k=0}^{\floor*{\frac{q}{2}}} f(k) \cos \left( \frac{2\pi}{q}k \right), \quad \forall j \in \{0,...,\floor*{q/2}\},$$
which is equivalent to say that $z_1a_1(f) \geq |z_j a_j(f)|$ for all $j \in \{0,...,\floor*{q/2}\}$. For $r$ being the first row of the transfer matrix $M$ of a generalized clock model with $M$ being non-increasing this implies that $\l_1$ is the second largest eigenvalue:
$$\l_1(M) = z_1a_1(f) \geq |z_j a_j(f)| = |\l_j|, \quad \forall j \in \{0,...,\floor*{q/2}\}.$$
We come to an important lemma which establishes an upper bound for the distance of the convoluted distributions to their linearization. 

\begin{lem}\label{lin}
There exists a function $o$ with $\lim_{h\rightarrow0}(o(h)/h)=0$ such that for all $h_1,...,h_k\in P_+(\O_0/0)$,
 \begin{equation*}
 \lVert \frac{1}{Z} \prod_{i=1}^k  h_i   -\mathbf{1}-\sum_{i=1}^k (h_i-\mathbf{1}) \rVert_A \leq o(\max_i ||h_i-\mathbf{1}||_A),
 \end{equation*}
 provided $\max_i ||h_i-\mathbf{1}||_A\leq1$. Here the product is understood to be pointwise and $Z>0$ is a normalizing constant turning $\prod_{i=1}^k  h_i $ into a probability vector. 
\end{lem}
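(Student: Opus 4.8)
The plan is to write $\frac{1}{Z}\prod_{i=1}^k h_i$ as a product and expand multiplicatively, controlling the error through the submultiplicativity of $\|\cdot\|_A$. Write $h_i = \mathbf{1} + g_i$ with $g_i := h_i - \mathbf{1} \in P(\O_0/0)$, so that $\|g_i\|_A \le 1$ for all $i$ by hypothesis. First I would expand the \emph{unnormalized} product $\prod_{i=1}^k h_i = \prod_{i=1}^k (\mathbf{1} + g_i) = \mathbf{1} + \sum_{i=1}^k g_i + R$, where $R := \sum_{|S| \ge 2} \prod_{i \in S} g_i$ collects all terms involving at least two factors $g_i$ (here $\mathbf{1}$ acts as a multiplicative identity on the relevant subspace since $\mathbf{1}$ is the equidistribution and products are pointwise, so $\mathbf{1} \cdot f = f$ up to the normalization bookkeeping — one has to be mildly careful that $\mathbf 1$ here is the normalized constant vector, so $\mathbf 1 \cdot f = \tfrac1q f$ pointwise; I will absorb these constant rescalings into $Z$ at the end, or equivalently work with the unnormalized constant vector $\phi_0$ and renormalize once). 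The key estimate is then $\|R\|_A \le \sum_{j=2}^{k} \binom{k}{j} (\max_i \|g_i\|_A)^j$, obtained by applying the triangle inequality term by term and then submultiplicativity $\|fg\|_A \le \|f\|_A\|g\|_A$ to each product $\prod_{i\in S} g_i$. Since $k \le B$ is bounded and $h:=\max_i\|g_i\|_A \le 1$, this sum is bounded by $C_B h^2$ for a constant $C_B$ depending only on $B$, which is $o(h)$.

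Next I would deal with the normalization. The normalizing constant is $Z = Z(\mathbf{1} + \sum_i g_i + R)$ in the sense that $\frac{1}{Z}\prod h_i$ is the probability vector proportional to $\prod h_i$; concretely $Z$ is (a fixed multiple of) the sum of the entries of $\prod h_i$. Because each $h_i$ is a probability vector and $g_i$ has mean zero (its $0$-th Fourier coefficient $a_0(g_i) = 0$ since $h_i$ and $\mathbf 1$ are both probability vectors), the sum of the entries of $\mathbf 1 + \sum_i g_i$ equals the sum of the entries of $\mathbf 1$, i.e. $1$ up to the constant normalization. Hence $Z - 1$ is entirely controlled by the entrywise sum of $R$, which is again $O(h^2)$ by the bound above (the entrywise sum is $\le$ a constant times $\|R\|_A$, or directly $|a_0(R)| \le \|R\|_A$). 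Writing $\frac{1}{Z}\prod h_i - (\mathbf 1 + \sum_i g_i) = \frac{1}{Z}R + (\frac{1}{Z}-1)(\mathbf 1 + \sum_i g_i)$ and using $\|\mathbf 1 + \sum_i g_i\|_A \le 1 + kh \le 1 + B$ together with $|Z - 1| = O(h^2)$ and $Z$ bounded away from $0$ for $h$ small, both terms are $O(h^2) = o(h)$. Collecting the bounds gives the claim with $o(h) := C h^2$ for an explicit constant $C = C(B)$, valid for $h \le 1$ (for $h$ near $1$ one may need $C$ slightly larger, but since $B$ is a fixed finite bound this is harmless; alternatively one restricts to $h \le 1$ throughout as in the statement and absorbs constants).

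The only genuinely delicate point — the main obstacle — is bookkeeping the constant $\mathbf 1$ versus $\phi_0$ and making sure the normalization step is airtight: one must verify that $Z$ stays bounded away from zero uniformly in the $h_i$ (which follows from $Z \ge 1 - \|R\|_A^{\text{sum}} \ge 1 - C_B h^2 \ge 1/2$ for $h$ small) and that the mean-zero property of the increments $g_i$ is correctly used so that the first-order term does not contribute to $Z - 1$. Everything else is a routine application of the triangle inequality and the submultiplicativity of $\|\cdot\|_A$ already established in the text via properties \eqref{q} and \eqref{star}, combined with the boundedness $k \le B$ of the number of children.
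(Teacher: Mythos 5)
Your overall plan (multiplicative expansion around $\mathbf{1}$, submultiplicativity of $\|\cdot\|_A$, then control the normalization) is the same route the paper takes, and items (b) of your argument are fine. But you have a genuine gap at exactly the step the paper singles out immediately after stating the lemma: a \emph{uniform} lower bound on $Z=\sum_{j}\prod_i h_i(j)$ over the whole range $\max_i\|h_i-\mathbf 1\|_A\le 1$, not just for $h$ small. Your bound ``$Z\ge 1-C_B h^2\ge 1/2$ for $h$ small'' breaks down as $h$ approaches $1$; moreover, with your conventions the leading term of $Z$ is in fact $q^{-(k-1)}$, not $1$ (since $\mathbf 1$ has entries $1/q$, so $\mathbf 1^k=q^{-(k-1)}\mathbf 1$), so even for small $h$ the ``$\ge 1/2$'' is off by a dimension-dependent factor. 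The deeper problem is that your argument never uses $h_i\in P_+(\O_0/0)$, and without that hypothesis $Z$ can actually vanish: for $q=5$, take $h_1=(0,\tfrac12,0,0,\tfrac12)$ and $h_2=(1,0,0,0,0)$ — then $h_1h_2\equiv 0$, $Z=0$, yet $\|h_1-\mathbf 1\|_A<1$. (Here $h_1\notin P_+(\O_0/0)$ precisely because $a_2(h_1)<0$.)

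The paper fills this gap using the strict positivity of \emph{all} Fourier coefficients of elements of $P_+(\O_0/0)$ (this is display \eqref{eq:foupos}, and is where the non-negativity of the eigenvalues of $M$ enters): since $a_j(h_i)>0$ for all $i,j$ and products $\phi_m\phi_n$ expand with non-negative coefficients by property \eqref{q}, the entire Fourier expansion of $\prod_i h_i$ has non-negative coefficients, whence $a_0(\prod_i h_i)\ge\prod_i a_0(h_i)=q^{-k}$ and $Z=q\,a_0(\prod_i h_i)\ge q^{-(k-1)}$ for \emph{every} admissible choice of $h_i$, not just for $h$ small. With that lower bound in hand, your remainder estimate $\|R\|_A\le C_B h^2$ and the bookkeeping $\bigl\lvert Z-q^{-(k-1)}\bigr\rvert\le q\|R\|_A$ assemble exactly as you sketch, and the bounded-degree assumption $k\le B$ gives a constant depending only on $B$ and $q$. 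So the fix is local: replace your ``$Z\ge 1-C_Bh^2$ for $h$ small'' with the $P_+$-positivity argument, and the rest of your proof goes through.
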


This lemma is similar to Lemma 2.7 from \cite{RPT} and the proof works similarly. We like to point out that the proof requires a bound on the term $\sum_{j=0}^{q-1} \prod_{i=1}^k h_i(j)$ from below. This is where the non-negativity of the eigenvalues $\l_i$ of the transition matrix enters. We have seen that under this condition all Fourier coefficients of the vectors $h_i$ are positive. This and property \eqref{q} imply that $\sum_{j=0}^{q-1} \prod_{i=1}^k h_i(j) \geq \sum_{j=0}^{q-1} \prod_{i=1}^k a_0(h_i)\phi_0(j) = 1/q^{k-1}$. 

\begin{rem}\label{rem:lin}
Consider the $q$-state generalized clock model with $q=4$ or $q=5$ on the binary tree. Assume that $M$ is a non-increasing circulant matrix which may even have negative eigenvalues.
Let $h_1$ and $h_2$ be any elements of $P_+(\O_0/0)$ with $h_i = \frac{1}{q} \phi_0 + a_1(h_i) \phi_1 + a_2(h_i) \phi_2 $. 
For $q=4$ we get by the orthogonality of the basis functions that
$$\sum_{j=0}^{q-1} h_1(j) h_2(j) = \frac{1}{q} + \frac{q}{2} a_1(h_1)a_1(h_2) + q \; a_2(h_1)a_2(h_2).$$
For $q=5$ we get 
$$\sum_{j=0}^{q-1} h_1(j) h_2(j) = \frac{1}{q} + \frac{q}{2} \big(a_1(h_1)a_1(h_2) + a_2(h_1) a_2(h_2)\big).$$
For probability vectors $h \in P_+(\O_0)$ there are of course restrictions on the values of the Fourier modes $a_{1,2}$ that come from $0 \leq h(j) \leq 1$ for all $j \in \O_0$ and from the fact that $h$ is non-increasing.  
For the region of values the coefficients $a_{1,2}$ can attain in this case, see Figure \ref{fig:Lemma2.7}.
It can be easily checked that $\sum_{j=0}^{q-1} h_1(j) h_2(j)$ is always bounded from below for any choice of $h_1, h_2$. Therefore Lemma \ref{lin} holds for the generalized clock model with $q=4,5$ even for transfer matrices with non-positive eigenvalues. 
\end{rem}

\begin{figure}
\centering
\begin{subfigure}{.5\textwidth}
  \centering
  \includegraphics[width=0.9 \linewidth]{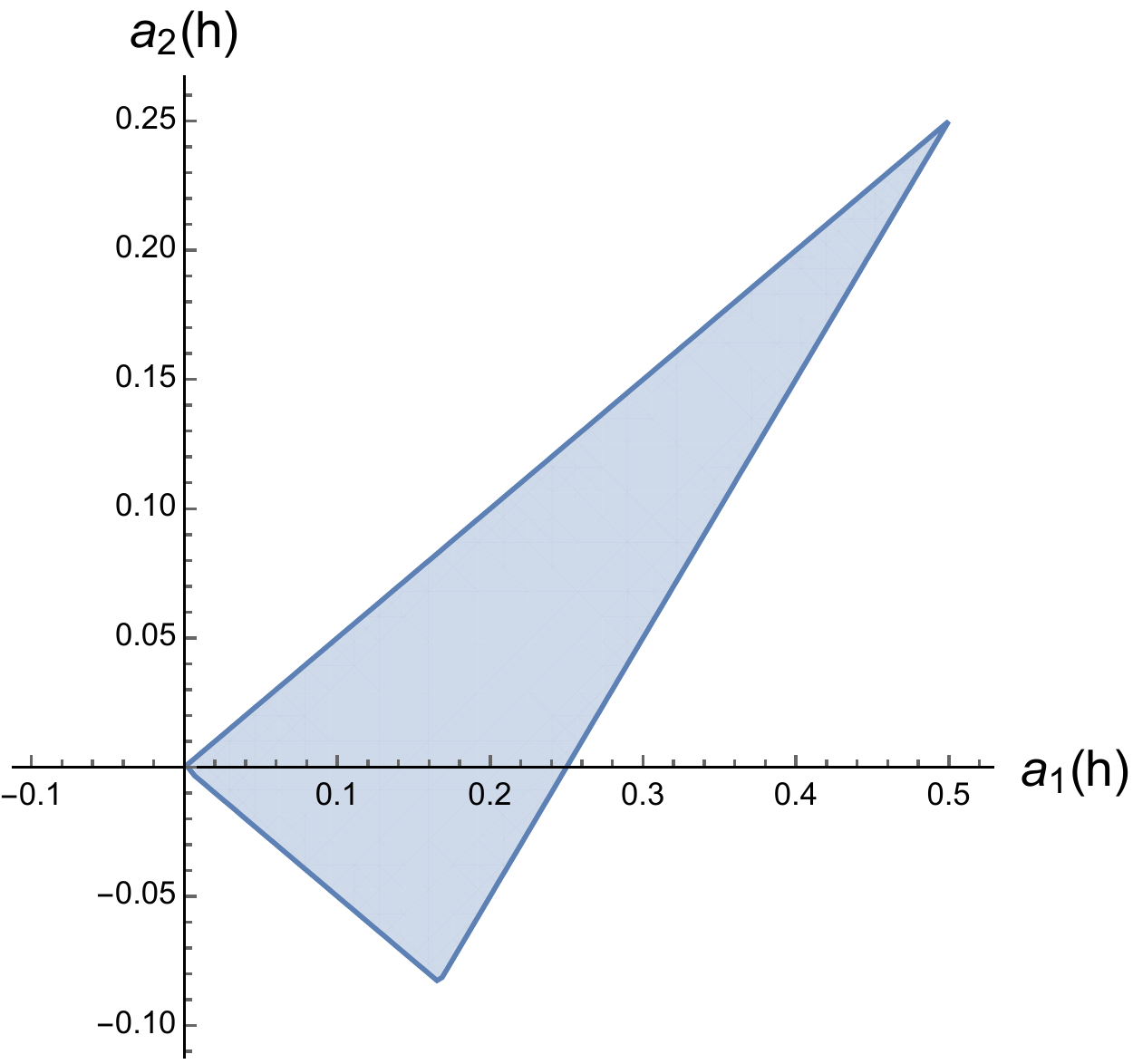}
  \caption{$q=4$.}
  \label{fig:q4Lemma2.7}
\end{subfigure}%
\begin{subfigure}{.5\textwidth}
  \centering
  \includegraphics[width=0.9 \linewidth]{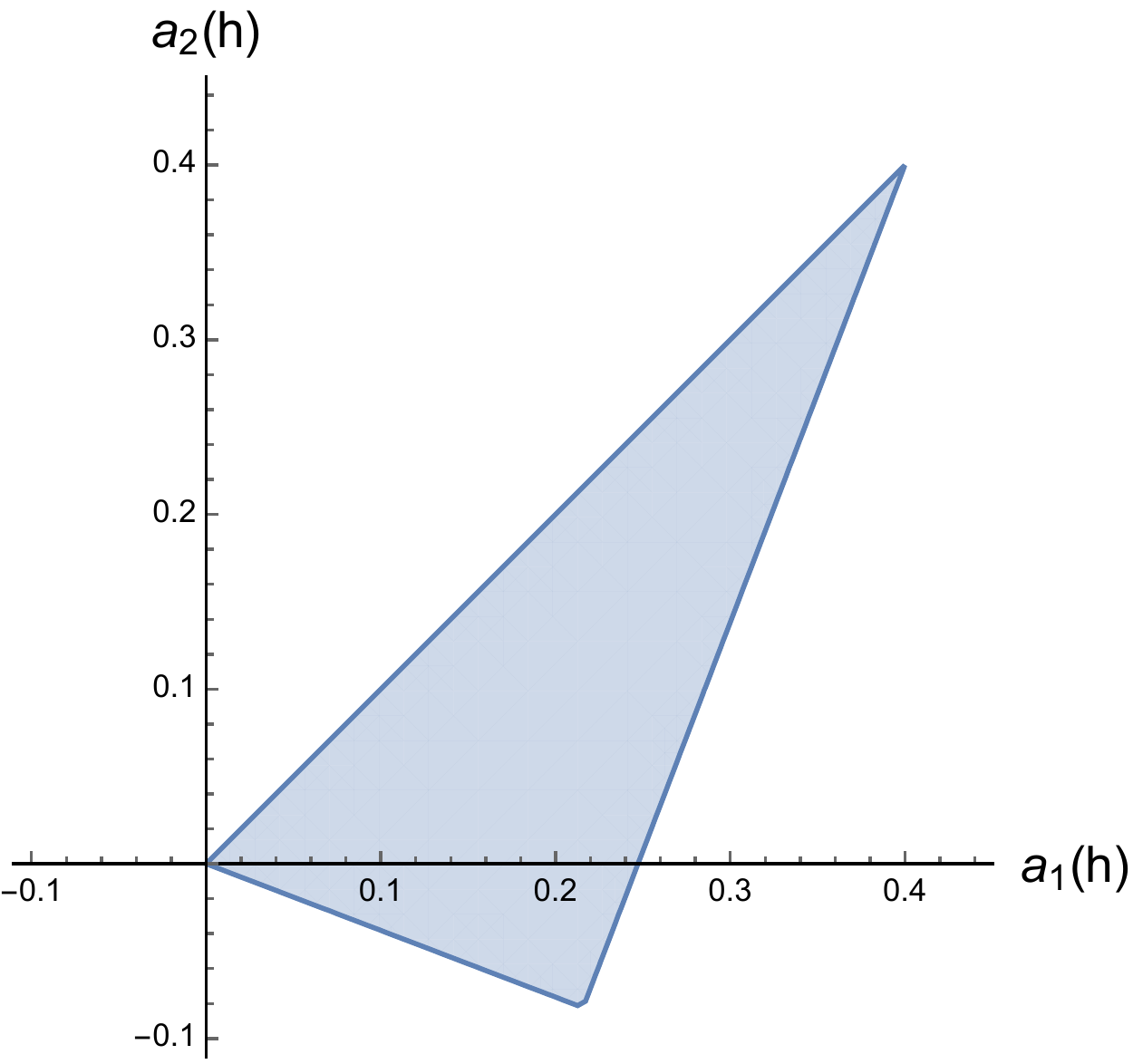}
  \caption{$q=5$.}
  \label{fig:q5Lemma2.7}
\end{subfigure}
\caption{Region of the coeffcients $a_1(h),a_2(h)$ for a probability vector $h\in P_+(\O_0/0)$, where $M$ is allowed to have non-positive eigenvalues.}
\label{fig:Lemma2.7}
\end{figure}

%
%
%
%

We come to the proof of Proposition \ref{pro:rpt} regarding robust phase transitions on trees in the generalized clock model. \\


\noindent\textit{Proof of Proposition \ref{pro:rpt}.} We will check the necessary conditions for Theorems 3.1 and 3.2 from the paper by Pemantle and Steif \cite{RPT}:\\

\begin{enumerate}[(i)]
\item From Lemma \ref{lin} follows the upper bound for the distance of the convoluted distributions to their linearization. 
\item Note that as $P(\O_0 /0)$ is finite-dimensional all norms are equivalent, in particular the $A$-Norm $||\cdot||_A$ and the supremum norm $||\cdot||_\infty$. 
\item Let $\Phi$ be a potential that corresponds to some circulant transfer matrix $M$. For $u\in (0,1]$ we define $M^u$ to be the transfer matrix that corresponds to the potential $u\Phi$, i.e., the first row of $M^u$ is given by 
$$r^u(j) = \exp\big(-u\Phi(0,j)\big).$$ 
Clearly $||r^u - \mathbf{1}||_{\infty}\to 0$ and hence $||r^u - \mathbf{1}||_A\to 0$ for $u\to0$. 

\item The transfer matrix $M$ is bounded by $1$ in the operator norm:
\begin{equation*}\begin{split}
||M||_A &= \sup_{||f||_A=1} ||Mf||_A = \sup_{||f||_A=1} ||r*f||_A\\
 &= \sup_{||f||_A=1} \sum_{j = 0}^{\lfloor q/2 \rfloor} |a_j(r*f)| 
= \sup_{||f||_A=1} \sum_{j = 0}^{\lfloor q/2 \rfloor} |\g_j a_j(r)a_j(f)|\\ 
& = \sup_{||f||_A=1} \sum_{j = 0}^{\lfloor q/2 \rfloor} |\l_j a_j(f)| \leq \sup_{||f||_A=1} \sum_{j = 0}^{\lfloor q/2 \rfloor} |a_j(f)| = 1.   
\end{split}\end{equation*}

\item The next inequality shows that the convolution is in some sense a contraction w.r.t. to the $A$-norm:
\begin{equation*}\begin{split}
||Mf- \mathbf{1}||_A &= ||r*f-\mathbf{1}||_A = \sum_{j = 0}^{\lfloor q/2 \rfloor} |a_j(r*f-\mathbf{1})| \\ 
&= \sum_{j = 0}^{\lfloor q/2 \rfloor} |a_j (r*f)-a_j(\mathbf{1})| = \sum_{j = 1}^{\lfloor q/2 \rfloor} |a_j(r*f)| \\
&= \sum_{j = 1}^{\lfloor q/2 \rfloor} |\g_j a_j(r) a_j(f)|
= \sum_{j = 1}^{\lfloor q/2 \rfloor} |\l_j a_j(f)| \\ 
&\leq \l_1 \sum_{j = 1}^{\lfloor q/2 \rfloor} | a_j(f)| = \l_1 ||f-\mathbf{1}||_A.
\end{split}\end{equation*}
\end{enumerate}
Therefore all the conditions for Theorem 3.1 from \cite{RPT} are met. Hence we can already conclude that in the generalized clock model $\l_1 \br(T) < 1$ implies the absence of a RPT.\\

\noindent In the following we check the conditions for Theorem 3.2 of \cite{RPT}:
\begin{enumerate}[(i)]
\item First we like to note that for all $f\in P_+(\O_0/0)$
$$f(0) = \max_{k \in \O_0} f(k) \quad \mbox{ and } \quad f\Big(\ceil*{\frac{q+1}{2}}\Big)= \min_{k\in \O_0} f(k).$$
\item 
If $f$ is non-increasing, it is easy to see that $Mf=r*f$ is also non-increasing. Therefore, as a consequence of the recursion \eqref{rec}, all $f\in P_+(\O_0/0)$ are non-increasing.\\
Define the functional $\mathcal{L}: P(\O_0/0)\rightarrow\mathbb{R}$ that captures the weighted absolute value of the first Fourier coefficient by 
$$\mathcal{L}(g) := \sum_{k=0}^{q-1} g(k) \phi_1(k) = z_1 a_1(g).$$
Let $f\in P_+(\O_0/0).$ Then
\begin{equation*}\begin{split}
\mathcal{L}(Mf- \mathbf{1}) &= z_1 a_1(Mf- \mathbf{1} ) = z_1 a_1 (r*f- \mathbf{1})\\ 
&= z_1 a_1(r*f) = z_1^2 a_1(r )a_1(f) \\ 
&= \Big(\sum_{k=0}^{q-1} r(k)\phi_1(k)\Big) \Big(\sum_{l=0}^{q-1} f(l) \phi_1(l) \Big) \\
&= \Big(\sum_{k=0}^{q-1} r(k)\phi_1(k)\Big) \Big(\sum_{l=0}^{q-1} (f(l)- \mathbf{1} ) \phi_1(l) \Big) = \l_1 \mathcal{L}(f- \mathbf{1}). 
\end{split}\end{equation*}

\item Note that $\mathcal{L}(f)$ can be bounded from above by the $A$-norm of $f$:
$$|\mathcal{L}(f)| \leq z_1 ||f||_A$$
for all $f\in P_+(\O_0/0).$ 

\item 
Now, we want to find a constant $d>0$ s.t. $\mathcal{L}(f) \geq d ||f- \mathbf{1} ||_A$ for all $f\in P_+(\O_0/0)$. We already know that $|a_1(f)|=a_1(f)\geq |a_j(f)|$ for all $j\geq 2$. Therefore we can choose $d:= z_1/\floor*{\frac{q}{2}}$.
\end{enumerate}

Therefore all the prerequisites for Theorem 3.2 in \cite{RPT} are met as well. Hence we get that in the general clock model $\l_1 \br T> 1$ implies the existence of a RPT and we have finished the proof of Proposition \ref{pro:rpt}. \hfill $\Box$ \\

\begin{rem}
Let us once more consider the $q$-state generalized clock model with $q\in \{ 4,5 \}$ on the binary tree. We have already seen in Remark \ref{rem:lin} that Lemma \ref{lin} also holds for the case of transition matrices with negative eigenvalues as long as they are non-increasing. One can easily check that all the other prerequisites for Theorem 3.1 from \cite{RPT} are also met in this case and therefore $\l_1 \br T < 1$ still implies the absence of robust phase transitions. 

For the verification of the conditions of Theorem 3.2 from \cite{RPT} note that the space $ P_+(\O_0/0)$ is three-dimensional and $|a_1(f)| = a_1(f) \geq |a_2(f)|$ for every $f\in P_+(\O_0/0)$. It is then easily seen that for suitably chosen constant $c_4$ indeed $L(f) \geq c_4 ||f-1||_A$ for all $f\in P_+(\O_0/0)$. For $q=4,5$ the constant can be chosen to be $c_4 = 2 / z_1 $. All the other conditions of Theorem 3.2 are clearly met as well.

We conclude that for $q=4,5$ the sharp threshold for the existence of a RPT holds even for transition matrices with negative eigenvalues as long as the transfer matrix $M$ is non-increasing.
\end{rem}

\section{Non-robust phase transitions in the generalized clock model on the binary tree}

\subsection{Case q=5}
In this section we will take a closer look at the generalized clock model with local state space $\O_0=\lbrace 0,...,4 \rbrace$. Furthermore we will assume that $T$ is the binary tree, i.e., the Cayley tree $\CC\TT(2)$. The first row of the transfer matrix $M$ is given by 
$$r= (r_0,r_1,r_2,r_3,r_4) = (a,b,c,c,b),$$
where $a+2b+2c=1$. 
As $M$ is symmetric there are five eigenvalues $\l_i, i=0,...,4$, that are given by 
$$\l_m = \sum_{k=0}^{4}r_k \exp \big(-2\pi imk/5 \big).$$
This gives us $$\l_0 = 1, \quad \l_1 = a+2b\cos(2\pi/5)+2c\cos(4\pi/5) = \l_4$$ and $$\l_2 =a+2b\cos(4\pi/5)+2c\cos(2\pi/5)=\l_3.$$
In the following we will assume that $\l_1\leq \frac{1}{2}$ and $1=\l_0 > \l_1 \geq  \l_2$ are given in such a way that $M$ is non-increasing. The assumption $\l_1\leq\frac{1}{2}$ is made as this is the known critical value for the existence of a robust phase transition in the generalized clock model on the binary tree. The inequality $\l_1 \geq \l_2$ is necessary for $M$ being non-increasing. By the Fourier inversion formula the eigenvalues determine the first row of $M$. We have
$$r_l = \frac{1}{5} \sum_{m=0}^4 \l_m \exp\big(2\pi ilm/5\big)$$
and therefore
\begin{equation*}\begin{split}
r_0 &= \frac{1}{5} (1+2\l_1+2\l_2), \\
r_1 &= \frac{1}{5} (1+2\l_1 \cos(2\pi/5)+ 2\l_2 \cos(4\pi/5)) =r_4,\\
r_2 &= \frac{1}{5} (1+2\l_1 \cos(4\pi/5)+ 2\l_2 \cos(2\pi/5))=r_3.
\end{split}\end{equation*}


\noindent The recursion for the marginal distributions of the Gibbs measure can now be seen as acting on the orthogonal space of the constant function, that is the set of symmetric mass zero measures, $\{ p \in P(\O_0/0) \mid \sum_{k=0}^{q-1} p_k = 0 \}$. For $q=5$ this space is two-dimensional with basis $\lbrace \phi_1,\phi_2\rbrace$, where $$\phi_1:= c^{-1} (1,\cos(2\pi/5),\cos(4\pi/5),\cos(4\pi/5),\cos(2\pi/5))^T$$ 
and $$\phi_2:=c^{-1}(1,\cos(4\pi/5),\cos(2\pi/5),\cos(2\pi/5),\cos(4\pi/5))^T $$
with $$c:=\sqrt{1+2\cos^2(2\pi/5)+2\cos^2(4\pi/5)}$$ being a normalizing constant. 
For a probability distribution $\a \in \M_1(\O_0)$ given by
$$\a(j)= \frac{1}{q}+\sum_{k=1}^2 \a_k\phi_k(j),$$
with $\a_k$ being the Fourier coefficients, we have
$$M\a(j)= \frac{1}{q}+\sum_{k=1}^2 \l_k\a_k\phi_k(j).$$
Let $F_5$ be the non-linear map which represents the recursion. As we are operating on the binary tree, recursion equation \eqref{rec} boils down to
$$\a(j) = F_5 \a(j) := \frac{\left( \frac{1}{q} + \sum_{k=1}^2 \l_k\a_k\phi_k(j)\right)^2}{\sum_{j=0}^{4}\left( \frac{1}{q} + \sum_{k=1}^2 \l_k\a_k\phi_k(j)\right)^2}.$$
For the summands in the numerator we have
\begin{equation*}
(M \a(j))^2=\frac{1}{q^2}+\frac{2}{q}\sum_{k=1}^{2} \l_k \a_k \phi_k(j)
+\sum_{k=1}^{2} \sum_{l=1}^{2} \a_k \a_l \l_k  \l_l \phi_k(j)\phi_l(j).
\end{equation*}
By the addition theorem for the cosine follows
\begin{equation*}
\phi_k(j)\phi_l(j) = v(\phi_{k+l}(j) + \phi_{k-l}(j))
\end{equation*}
and
\begin{equation*}
\phi_k(j)^2 = v \phi_{2k}(j) + w,
\end{equation*}
where $v:= \frac{1}{2c}$ and $w:= \frac{1}{2c^2} =2v^2 = \frac{1}{q}$.
This gives us
\begin{equation*}\begin{split}
(M \a(j))^2 &=\frac{1}{q^2}+\frac{2}{q}\sum_{k=1}^{2} \l_k \a_k \phi_k(j) +v \sum_{1\leq k,l \leq 2, k\neq l} \a_k \a_l \l_k \l_l (\phi_{k+l}(j)+\phi_{k-l}(j)) \\
& \qquad +v\sum_{1\leq k\leq 2} \a^2_k \l_k^2 \phi_{2k}(j) 
+w\sum_{1\leq k\leq 2} \a^2_k \l^2_k
\cr
\end{split}
\end{equation*} 
and by orthogonality of the basis functions $\phi_1, \phi_2$ the normalization is just
\begin{equation*}\begin{split}
&\sum_{1\leq j\leq q } (M \a(j))^2=\frac{1}{q}+w q\sum_{1\leq k\leq 2} \a^2_k \l^2_k.
\end{split}
\end{equation*} 
Therefore the recursion can be written as
\begin{equation}\label{ref}\begin{split}
F_5 \a (j) &= \frac{1}{q}+\frac{1}{\frac{1}{q}+w q\sum_{1\leq k\leq 2} \a^2_k \l^2_k}
\Bigl(\frac{2}{q}\sum_{k=1}^{2} \l_k \a_k \phi_k(j) \\
+ & \quad v \sum_{1\leq k,l \leq 2, k\neq l} \a_k \a_l \l_k \l_l 
(\phi_{k+l}(j)+\phi_{k-l}(j))
+v\sum_{1\leq k\leq 2} \a^2_k \l_k^2 \phi_{2k}(j) 
\Bigr) 
\cr
&= \frac{1}{q} + \sum_{m=1}^2 \a'_m \phi_m(j).
\end{split}
\end{equation} 
For $q=5$ this gives
\begin{equation*}\begin{split}
&\frac{1}{\frac{1}{q}+w q\sum_{1\leq k\leq 2} \a^2_k \l^2_k}
\Bigl( \frac{2}{q}\sum_{k=1}^{2} \l_k \a_k \phi_k(j)\cr
&+\sum_{1\leq k,l \leq 2, k\neq l} \a_k \a_l \l_k \l_l 
v(\phi_{k+l}(j)+\phi_{k-l}(j))
+v\sum_{1\leq k\leq 2} \a^2_k \l_k^2 \phi_{2k}(j) 
\Bigr) 
\cr
&= \sum_{m=1}^2 \a'_m \phi_m(j).
\end{split}
\end{equation*}

Rewriting these last equations produces two fixed-point equations for the Fourier modes $\a_1$ and $\a_2$:
\begin{equation}\label{a1}
	\a_1 = \frac{1}{\frac{1}{5}+\a_1^2\l_1^2+\a_2^2\l_2^2} \left( \frac{2}{5}\l_1\a_1 + 2\l_1\l_2 v \a_1\a_2+ v\l_2^2 \a_2^2\right)
\end{equation}
and 
\begin{equation}\label{a2}
	\a_2 = \frac{1}{\frac{1}{5}+\a_1^2\l_1^2+\a_2^2\l_2^2} \left( \frac{2}{5}\l_2\a_2 + 2\l_1\l_2 v \a_1\a_2+ v\l_1^2 \a_1^2\right).
\end{equation}
We set $\l_1=\frac{1}{2}$ as this is the critical value for the existence of RPT on the binary tree. 
Note that $\a_1=0$ if and only if $\a_2=0$. We define 
$$A:= 20 \l_2^2\a_2^2.$$
From the first of these equations we get
\begin{equation*}\begin{split}
\a_1(4+5\a_1^2+A) &= 4\a_1+20\l_2 v \a_1 \a_2 + vA 
\end{split}\end{equation*}
and hence 
\begin{equation}\label{A}
A = \frac{20\l_2 v \a_1 \a_2- 5\a_1^3}{\a_1-v}
\end{equation}
for $\a_1 \neq v$.
In the case where $\a_1 = v$ we find $\a_2 = \frac{v}{4\l_2}$. Plugging these values into the second fixed-point equation shows that this solution only appears for $\l_2 = (\frac{1}{5}v + \frac{1}{4}v^3 + \frac{1}{16} v^3)/(\frac{2}{5}v + 2v^3) \approx 0.385417$. 

Continuing with the second equation for $\a_1 \neq v$ we may use \eqref{A} to reduce the order of $\a_2$ repeatedly. This leads to
\begin{equation*}\begin{split}
& \a_2 = \frac{8\l_2 \a_2 + 20 \l_2 v \a_1 \a_2 + 5 v \a_1^2}{4+5\a_1^2+\frac{20 \l_2 v \a_1 \a_2 -5\a_1^3}{\a_1-v}} \\
\implies & \a_2 \Big( 4\a_1+5\a_1^3-4v-5v\a_1^2+20\l_2 v\a_1 \a_2 - 5\a_1^3 \Big) \\ 
& \qquad = (\a_1-v) (8\l_2 \a_2+20\l_2 v\a_1 \a_2 +5v\a_1^2)\\
\implies & 4\a_2 (\a_1-v)-5v\a_1^2 \a_2 + \frac{v\a_1}{\l_2} \Big( \frac{20\l_2 v\a_1 \a_2 - 5\a_1^3}{\a_1-v}\Big) \\
& \qquad = (\a_1-v) (8\l_2 \a_2+20\l_2 v \a_1 \a_2+5v\a_1^2)\\
\implies & \Big(4\l_2 (\a_1-v)^2-5\l_2 v\a_1^2(\a_1-v) + 20 \l_2 v^2\a_1^2 - 8 \l_2^2(\a_1-v)^2 \\
& \qquad-20\l_2^2 v \a_1 (\a_1-v)^2\Big)\a_2    = 5v\a_1^4 + 5v \l_2 \a_1^2(\a_1-v)^2.
\end{split}\end{equation*}
For $P_3(\a_1)\neq 0$ we have 
\begin{equation}\label{eq:y}\begin{split}
\a_2 = f(\a_1) = \frac{P_4(\a_1)}{P_3(\a_1)},
\end{split}\end{equation}
where 
\begin{equation*}\begin{split}
P_3(\a_1) &= 4\l_2 (\a_1-v)^2-5\l_2 v\a_1^2(\a_1-v) + 20 \l_2 v^2\a_1^2 \\
& \qquad  - 8 \l_2^2(\a_1-v)^2 -20\l_2^2 v \a_1 (\a_1-v)^2
\end{split}\end{equation*}
and
$$P_4(\a_1) = 5v\a_1^4 + 5v \l_2 \a_1^2(\a_1-v)^2.$$
If $P_3(\a_1)=0$ there is only the trivial solution.  
Substituting \eqref{eq:y} for the first fixed-point equation leaves us to solve
$$5\a_1^3P_3^2(\a_1) + 20 \l_2^2 P_4^2(\a_1) \a_1 - 20 \l_2 v \a_1 P_3(\a_1) P_4(\a_1) - 20 v \l_2^2 P_4^2(\a_1) =0,$$
which is equivalent to 
\begin{equation} \begin{split}\label{quartic}
 \a_1^3 (\a_1 - v)^2  \Big[ &   \a_1^4 \left( \frac{125 \l_2^2}{2} + 200 \l_2^3 + 250 \l_2^4 \right)  \\
	& + \a_1^3 \left(-20 \sqrt{10} \lambda_2^2 - 75 \sqrt{10} \l_2^3 + 125  \sqrt{10} \l_2^4 \right)  \\
	& + \a_1^2 \left( 140 \l_2^2 - 345 \l_2^3 + 75 \l_2^4 \right)  \\
	& + \a_1 \left(-16 \sqrt{10} \l_2^2 + 64 \sqrt{10} \l_2^3 - 125 \sqrt{\frac{5}{2}} \l_2^4 \right) \\ 
	& + 8 \l_2^2 - 36 \l_2^3 + 40 \l_2^4  \Big] =:  \a_1^3 (\a_1 - v)^2 q_{\lambda_2}(\a_1). 
\end{split}\end{equation}
Let us write for the quartic function $q_{\l_2} (\a_1) = a\a_1^4 + b\a_1^3 + c\a_1^2 + d\a_1 + e$. 
Looking at the discriminant $\Delta$ of $q_{\l_2}$, i.e.,
\begin{equation*}\begin{split}
\D &=  256a^{3}e^{3}-192a^{2}bde^{2}-128a^{2}c^{2}e^{2}+144a^{2}cd^{2}e-27a^{2}d^{4}\\
&  \quad +144ab^{2}ce^{2}-6ab^{2}d^{2}e-80abc^{2}de+18abcd^{3}+16ac^{4}e \\
&  \quad -4ac^{3}d^{2}-27b^{4}e^{2}+18b^{3}cde-4b^{3}d^{3}-4b^{2}c^{3}e+b^{2}c^{2}d^{2},
\end{split}\end{equation*}
it can be shown by numerical calculations that there exists no non-trivial solutions for $\a_1$ if $0 \leq\l_2 < \l_2^c$, where $\l_2^c$ is a real root of $\Delta$ with  
$\l_2^c \approx 0.370748$. Indeed, in this regime we have $\Delta(\l_2) >0$ and $P(\l_2) >0$, where
$$P := 8ac - 3b^2.$$
This is a well-known sufficient condition for the non-existence of any real roots of the quartic function $q_{\lambda_2}$. For the critical value $\l_2^c$ we have that $\Delta(\l_2^c) = 0, P(\l_2^c)>0$ and $D(\l_2^c) \neq 0$, where 
$$D := 64a^3e - 16 a^2c^2 + 16 ab^2c - 16a^2bd - 3b^4.$$
This means that for $\l_2 = \l_2^c$ a double real root for $q_{\lambda_2}$ appears. 
For $\l_2 \in (\l_2^c, \tilde \l_2^c)$ where $\Delta(\tilde \l_2^c) = 0$ with $\tilde \l_2^c \approx 0.494119$ we see by numerical computations that $\Delta (\l_2) < 0$ and hence there exist exactly two distinct real roots for $q_{\l_2}$. In the regime where $\l_2 > \tilde \l_2^c$ we have that $\Delta(\l_2) >0$, $P(\lambda_2) < 0$ and $D(\lambda_2) <0$. Hence there exist four distinct real roots in this case. 
In the situation where $\l_2 = \tilde \l_2^c$ we get that $\Delta(\l_2) = 0, P(\l_2) <0 $, $D(\l_2) <0 $ and $\Delta_0(\l_2) \neq 0$, where $\Delta_0 := c^2 - 3bd + 12ae$. This means that there exists one real double root and two simple real roots in this case \cite{Re22}. 

\begin{figure}
 \centering
\begin{subfigure}{1\linewidth}
    \centering
    \includegraphics[height=8cm,width=8cm]{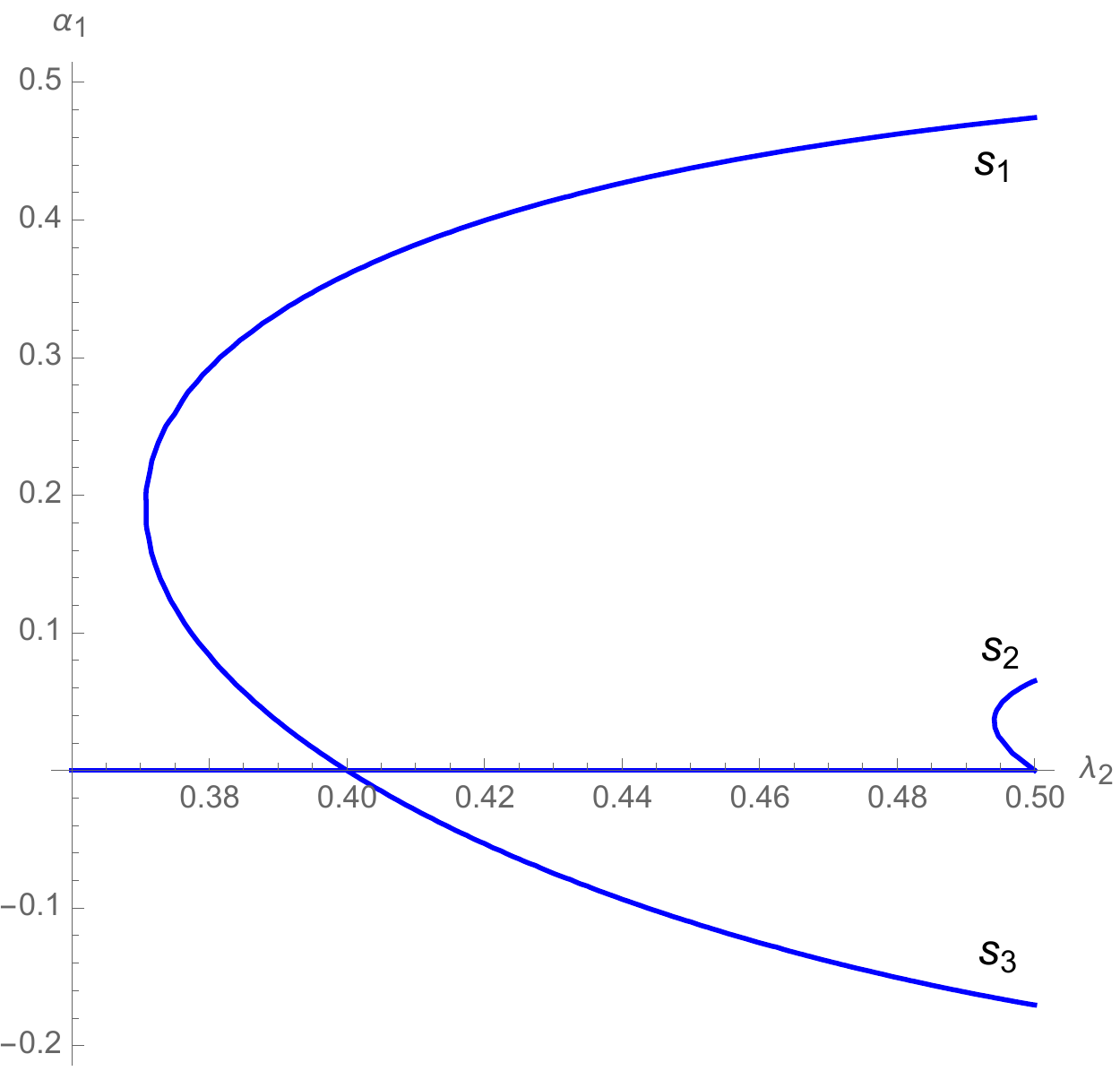} 
    \caption{The values of $\a_1$ on the vertical axis plotted against the corresponding values of the third largest eigenvalue $\l_2$ on the horizontal axis.}
    \subcaption*{}
    \label{fig:1}
 \end{subfigure}
 
\begin{subfigure}{1\linewidth}
   \centering
    \includegraphics[height=8cm,width=8cm]{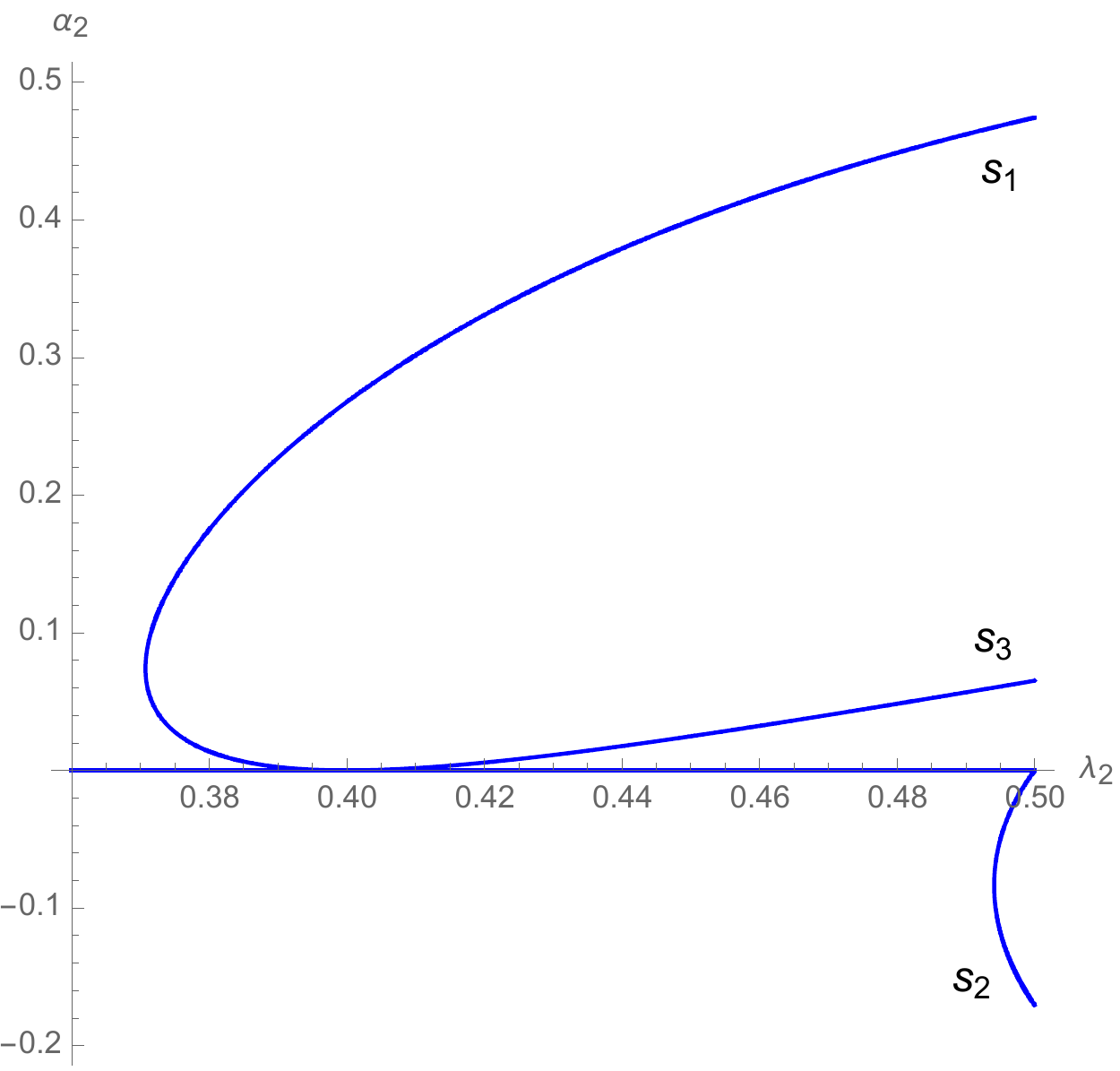} 
    \caption{The values of $\a_2$ on the vertical axis plotted against the corresponding values of the third largest eigenvalue $\l_2$ on the horizontal axis.}
    \label{fig:2}
 \end{subfigure}
\caption{The solutions to the boundary law equations \eqref{a1}, \eqref{a2} for $q=5$ in terms of the Fourier modes $\a_{1,2}$ depending on the third largest eigenvalue $\l_2$ of the transfer matrix $M$.}
\end{figure}

%


\begin{figure}
\centering
\begin{subfigure}{.35\textwidth}
  \centering
  \includegraphics[width=.3\linewidth]{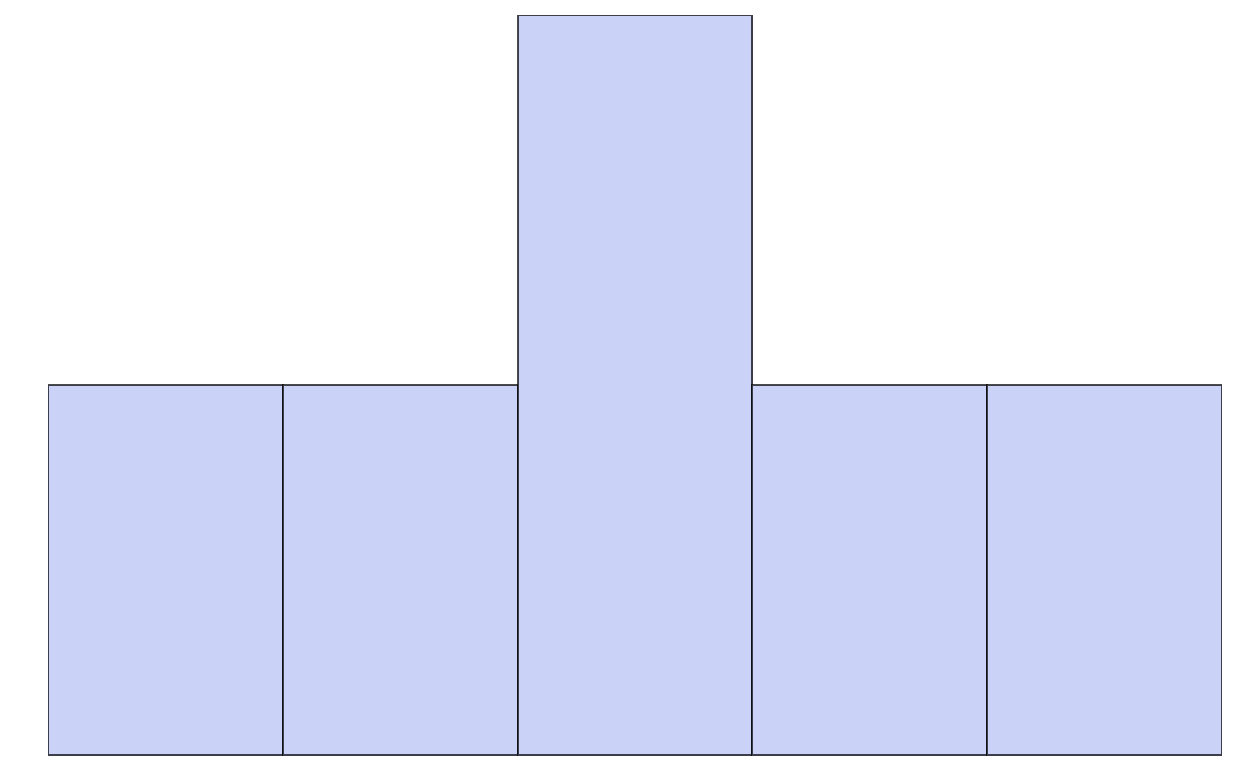}
  \label{fig:sub1}
\end{subfigure}%
\begin{subfigure}{.3\textwidth}
  \centering
  \includegraphics[width=.35\linewidth]{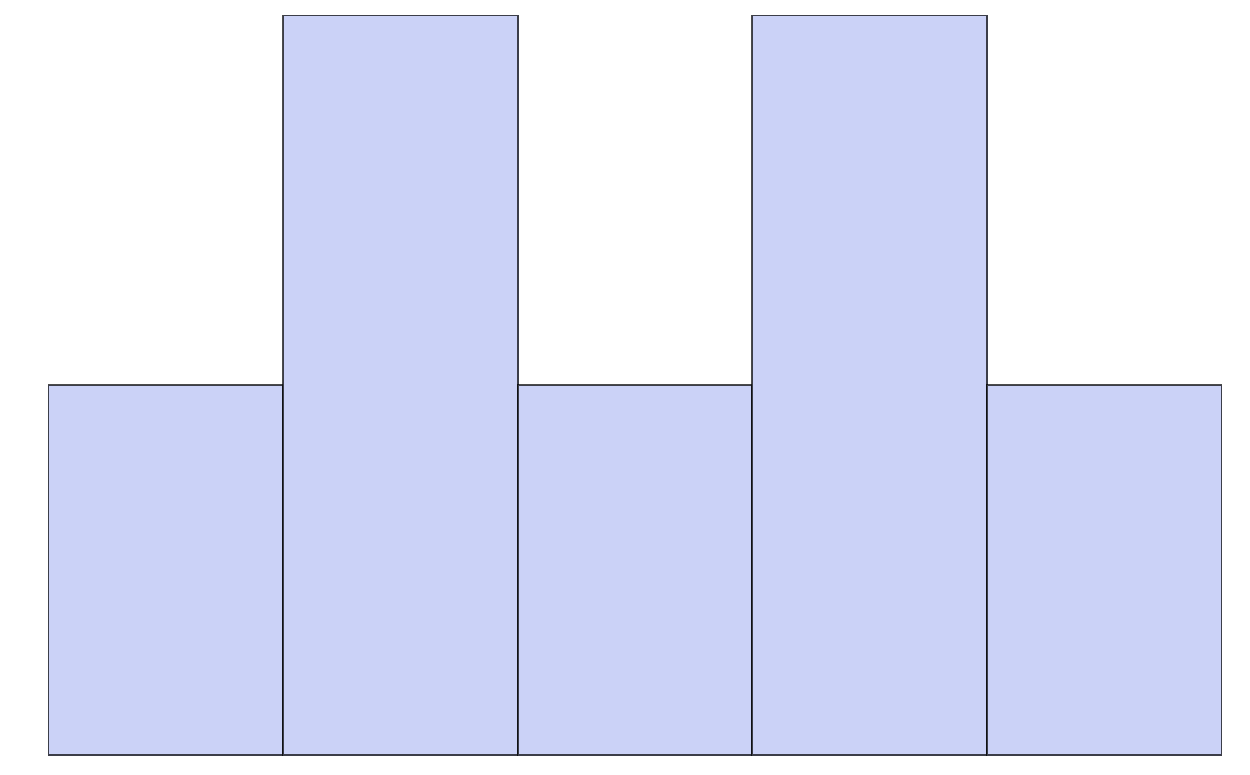}
  \label{fig:sub2}
\end{subfigure}
\begin{subfigure}{.3\textwidth}
  \centering
  \includegraphics[width=.35\linewidth]{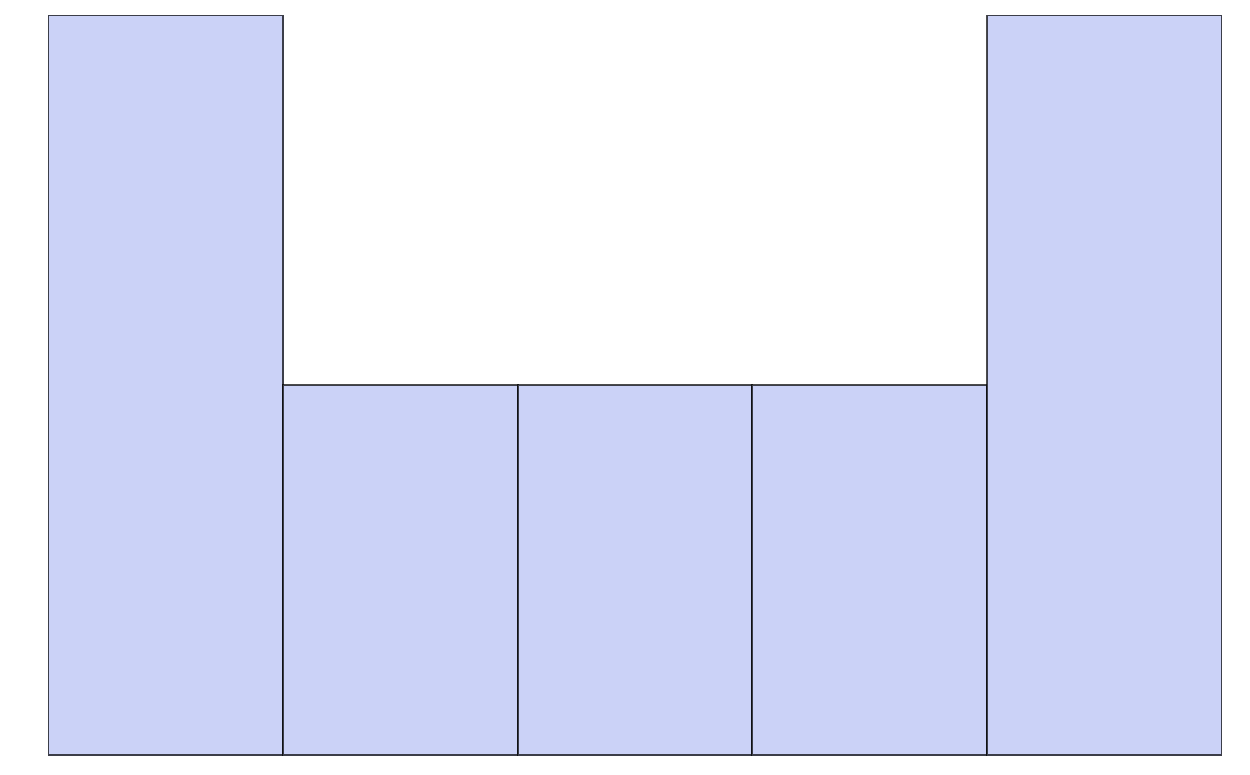}
  \label{fig:sub3}
\end{subfigure}
\caption{Symmetry of the solutions $s_1,s_2,s_3$ in the Potts model.}
\label{fig:bar}
\end{figure}


All the solutions to the polynomial equation \eqref{quartic} can be calculated by numerical techniques. For a plot of the derived coefficients $\a_1$ and $\a_2$ respectively against the corresponding value of $\l_2$, see Figures \ref{fig:1} and \ref{fig:2}. It can be checked by simple numerical calculations that all of these solutions correspond to a probability distribution. 

In accordance with our previous investigation we find that there only exists the trivial solution to the fixed-point equations if $\l_2 < \l_2^c$. 
At the critical value $\l_2^c$ an additional solution away from the trivial one appears, which then undergoes a pitchfork bifurcation. For $\l_2 = 0.4$ the lower branch then becomes $0$, which happens when  
$$e = 8 \l_2^2 - 36 \l_2^3 + 40 \l_2^4 = 0.$$
As we have predicted, an additional solution appears when $\l_2 = \tilde \l_2^c$. This solution then also undergoes a bifurcation for increasing values of $\l_2$ and the lower branch becomes $0$ for $\l_2 = 0.5$, which is again where $e=0$.  
So we have exactly four different solutions to our two-dimensional fixed-point equation when $\l_2 = 0.5$ which is of course when our generalized clock model turns into the Potts model with no external magnetic field. 
In this situation there are three additional solutions $s_1,s_2,s_3$ to the trivial one. The symmetries of the corresponding boundary laws are visualized in Figure \ref{fig:bar}. 

In the Potts model the transition matrix can be written in the form
\begin{equation*}
M_{i,j} = \begin{cases} e^{\b} /Z, &\mbox{if } i=j  \\ 
1/Z, & \mbox{if } i \neq j \end{cases}
\end{equation*}
with $\b>0$ and $Z=e^\b+q-1$ being a normalizing constant. Then $\l_1 = \l_2 = \frac{e^{\b}-1}{e^{\b}+q-1}$. Note that $2 \l_1=1$ if and only if $\theta := e^{\b} = q+1$. 

Let us comment on this value. Recall that in the $5$-state Potts model the translation-invariant states are described by the translation-invariant solutions for the boundary law equation \cite{KuRo14}: There is always 
the free solution (corresponding to open boundary conditions), and each boundary law has the property that its $5$ components 
take only two values. There are two different one-dimensional fixed-point equations for the case of a decomposition into either $1+4$ or $2+3$ components. 
Each of these fixed-point equations have different critical inverse temperatures at which solutions appear and bifurcate into an upper and a lower branch. 
It is a property of the recursion that on the binary tree at $\theta := e^{\b} = q+1$ the lower branches of these recursions become equal to the free solution. This behavior is similar for all Potts models on Cayley trees of any degree and the corresponding value $\theta_{cr}$ is known \cite{KuRo14}. 
As we will see shortly in Proposition \ref{pro:pottsRPT} this critical value is identical with the critical value for the existence of RPT for Cayley trees of any degree.

This is remarkable. Robust phase transition is caused by local instability of the equidistribution, that is small perturbations won't be attracted 
by the equidistribution but will converge to (one of) the stable symmetry breaking fixed points. 
On the other hand, the degeneration of the lower branches of the $m$-indexed boundary laws 
(with $m$ singled out components) is indicated by the same equation. 
Let $\theta_{RPT}$ denote the critical value of $\theta$ for the existence of RPT in the Potts model. 

\begin{pro}\label{pro:pottsRPT}
Let $T$ be a Cayley tree of degree $d \in \N$. In the Potts model with $q \in \N$ states we have $$\theta_{RPT}=\theta_{cr}.$$  
\end{pro}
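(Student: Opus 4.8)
The plan is to show that both critical values, $\theta_{RPT}$ and $\theta_{cr}$, are roots of the same polynomial identity in $\theta$, obtained from linearizing the boundary law recursion at the equidistribution. First I would recall the explicit form of the Potts transfer matrix on the Cayley tree of degree $d$: with $M_{i,j}=e^{\b}/Z$ for $i=j$ and $1/Z$ otherwise, where $Z=e^{\b}+q-1$, the non-trivial eigenvalue is $\l_1=\l_2=\cdots=(\theta-1)/(\theta+q-1)$ with $\theta=e^{\b}$, and the $d$ children contribute multiplicatively. Since the Potts transfer matrix is non-increasing with non-negative eigenvalues, Proposition~\ref{pro:rpt} applies and tells us that on the Cayley tree $\CC\TT(d)$, for which $\br T=d$, the robust phase transition threshold is exactly $\l_1 \br T = 1$, i.e. $\theta_{RPT}$ is the unique $\theta>1$ with $d(\theta-1)/(\theta+q-1)=1$; solving gives $\theta_{RPT}=q+d$ (so $q+1$ when $d=1$, consistent with the $q=5$ computation above).

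Second I would analyze $\theta_{cr}$ directly from the Potts boundary law recursion. By the structure recalled after the statement, a translation-invariant boundary law is parametrized by a decomposition of $\{0,\dots,q-1\}$ into $m$ ``singled-out'' components and $q-m$ others, and reduces to a one-dimensional fixed-point equation for the ratio $x$ of the two values. One writes the recursion $x = \Psi_m(x)$ explicitly: each factor in the product over children is a ratio of the form $(\theta x + q - m)/(x\,(m-1)+\theta+q-m)$ or similar, raised to the power $d$. The free solution corresponds to $x=1$, which is always a fixed point. The quantity $\theta_{cr}$ is defined as the value of $\theta$ at which the \emph{lower branch} of the bifurcated solutions merges with the free solution $x=1$. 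I would characterize this merging: as $\theta$ decreases, the lower non-trivial fixed point $x_-(\theta)$ approaches $1$, and the merge occurs precisely when $x=1$ becomes a degenerate (tangent) fixed point, i.e. when $\Psi_m'(1)=1$. Computing $\Psi_m'(1)$ for the $d$-fold iterated Potts map gives $\Psi_m'(1) = d\cdot\frac{\theta-1}{\theta+q-1}$, \emph{independent of $m$} — this is the crucial cancellation, reflecting that the linearization at the equidistribution does not see the choice of decomposition. Setting this equal to $1$ again yields $\theta = q+d$.

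Third, I would reconcile the two computations: both $\theta_{RPT}$ and $\theta_{cr}$ satisfy $d(\theta-1)/(\theta+q-1)=1$, and since the left side is strictly increasing in $\theta>1-q$ with range covering $1$ exactly once, the equation has the unique solution $\theta = q+d$, whence $\theta_{RPT}=\theta_{cr}$. The identification $\theta_{RPT}$ with the solution of $\l_1\br T=1$ is legitimate because the Potts model on $\CC\TT(d)$ meets the hypotheses of Proposition~\ref{pro:rpt} (non-negative eigenvalues, non-increasing $M$, $\br \CC\TT(d)=d$) and because at $\l_1\br T = 1$ exactly one checks directly, as in \cite{RPT}, that the RPT threshold is attained at equality rather than lying strictly inside one of the open regimes.

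\textbf{Main obstacle.} The delicate point is the second step: proving that $\theta_{cr}$ — defined through the \emph{bifurcation behavior} of the one-dimensional boundary law recursions, and a priori different for the $1+(q-1)$ and $2+(q-2)$ decompositions — is governed by the condition $\Psi_m'(1)=1$ and that this derivative is independent of $m$. One must verify that the lower branch genuinely detaches from the free solution exactly at the tangency point (not, say, by crossing it while remaining distinct, or vanishing off to another regime), which requires a short monotonicity/continuity argument on $x \mapsto \Psi_m(x)$ near $x=1$ together with convexity control. I also need to confirm, using the known classification of Potts boundary laws on Cayley trees from \cite{KuRo14}, that these are the only translation-invariant solutions, so that $\theta_{cr}$ is well-defined as the infimum of the relevant critical temperatures and that infimum is attained at the common value $q+d$. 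The remaining computations (evaluating $\Psi_m'(1)$, solving the linear-in-disguise equation) are routine.
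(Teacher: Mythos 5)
Your proof takes a genuinely different route from the paper's. The paper's proof is two lines: it quotes $\theta_{cr} = \frac{d+q-1}{d-1}$ from \cite{KuRo14}, solves $\l_1 = \frac{\theta-1}{\theta+q-1} = \frac{1}{d}$ (the threshold from Proposition~\ref{pro:rpt} with $\br T = d$) for $\theta_{RPT}$, and notes the two values coincide. You instead re-derive $\theta_{cr}$ by linearizing each $m$-indexed one-dimensional boundary-law map $\Psi_m$ at the free solution; your central claim $\Psi_m'(1) = d\,\frac{\theta-1}{\theta+q-1} = d\l_1$, independent of $m$, is correct, so all the lower branches merge with the free solution at the single condition $d\l_1 = 1$, which is precisely the equation defining $\theta_{RPT}$. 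This is more work but buys a conceptual explanation the paper leaves implicit: the linearization at the equidistribution does not see the decomposition into $m + (q-m)$ blocks, which is why the RPT threshold and the merging threshold share an equation rather than merely sharing a closed-form value. Two caveats, though. First, an algebra slip: $d(\theta-1)/(\theta+q-1)=1$ gives $\theta_{RPT} = \frac{q+d-1}{d-1}$, not $q+d$; also the binary tree is $\CC\TT(2)$ in the paper's convention (so $d=2$, and the corrected formula yields $\theta = q+1$), not ``$d=1$'' as your parenthetical suggests. Second, the obstacle you flag --- that the merging of the lower branch with the free solution is characterized by the tangency $\Psi_m'(1)=1$ --- is a genuine gap in your sketch, but it closes without any monotonicity or convexity estimates: clearing denominators in $\Psi_m(x)=x$ yields a polynomial $p_m$ with $p_m(1)\equiv 0$ for all $\theta$, and the non-trivial fixed point touching $x=1$ is exactly the double-root condition $p_m'(1)=0$, which is algebraically equivalent to $\Psi_m'(1)=1$. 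With these repairs the argument is correct, though it does from scratch what the paper delegates to \cite{KuRo14}.
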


\textit{Proof.} It is known that $\theta_{cr} = \frac{d+q-1}{d-1}$ \cite{KuRo14}. As $\theta = \theta_{RPT}$ iff $$\l_1= \frac{\theta-1}{\theta+q-1} = \frac{1}{d}$$
the claim is obviously true. \hfill $\Box$

\subsection{Case q=4} 
We still consider the generalized clock model on the binary tree $T$. Let the local state space be given by $\O_0 = \{0,...,3\}$. Again we assume $M$ to be non-increasing. The entries of the first row of the transfer matrix are given by
$$r_0 = \frac{1}{4}(1+2\l_1+\l_2),$$
$$r_1 = \frac{1}{4}(1-\l_2) = r_3,$$
$$r_2 = \frac{1}{4}(1-2\l_1+\l_2),$$
where $\l_i, i=0,1,2$ denote the eigenvalues of $M$. 
For the basis of the mass-zero measures we choose
$$\varphi_1 := c_1^{-1} (1, \cos \frac{\pi}{2}, \cos \pi, \cos \frac{3\pi}{2} ) = c_1^{-1} (1,0,-1,0), \quad c_1^{-1}:=\sqrt{2},$$
$$\varphi_2 := c_2^{-1} (1, \cos \pi, \cos 2\pi, \cos 3\pi ) = c_2^{-1} (1,-1,1,-1), \quad c_2^{-1}:=2.$$
The symmetric probability distributions can be represented as
$$\a(j) = \frac{1}{4}+\a_1 \varphi_1(j)+ \a_2\varphi_2(j).$$
The recursion $F_4$ of the distributions on the binary tree $T$ is then given by
$$\a(j) = F_4\a(j) := \frac{\left( \frac{1}{4} + \sum_{k=1}^2 \l_k\a_k\varphi_k(j)\right)^2}{\sum_{j=0}^{3}\left( \frac{1}{4} + \sum_{k=1}^2 \l_k\a_k\varphi_k(j)\right)^2}.$$
The summands in the numerator are given by
\begin{equation*}\begin{split}
(M\a(j))^2 = & \frac{1}{4^2} + \frac{1}{4}\Big(\a_1^2\l_1^2+\a_2^2\l_2^2\Big) + \Big(\frac{1}{2}\l_1\a_1+\a_1\a_2\l_1\l_2\Big)\varphi_1(j) \\ 
& + \Big(\frac{1}{2}\l_2\a_2+\frac{1}{2}\a_1^2\l_1^2\Big) \varphi_2(j).
\end{split}\end{equation*}
By orthogonality the normalization is just
$$\sum_{j=0}^3 (M\a(j))^2  =  \frac{1}{4} + \a_1^2\l_1^2 + \a_2^2\l_2^2$$
and hence
$$F_4 \a(j) = \frac{1}{4} + \frac{\frac{1}{2}\l_1\a_1+\a_1\a_2\l_1\l_2}{\frac{1}{4} + \a_1^2\l_1^2 + \a_2^2\l_2^2}\varphi_1(j) + \frac{\frac{1}{2}\l_2\a_2+\frac{1}{2}\a_1^2\l_1^2}{\frac{1}{4} + \a_1^2\l_1^2 + \a_2^2\l_2^2} \varphi_2(j).$$
This leads to the two fixed-point equations
\begin{equation}\begin{split}\label{q4}
\a_1 &= \frac{\frac{1}{2}\l_1\a_1+\a_1\a_2\l_1\l_2}{\frac{1}{4} + \a_1^2\l_1^2 + \a_2^2\l_2^2}, \\
\a_2 &= \frac{\frac{1}{2}\l_2\a_2+\frac{1}{2}\a_1^2\l_1^2}{\frac{1}{4} + \a_1^2\l_1^2 + \a_2^2\l_2^2}.
\end{split}\end{equation}
For the critical value $\l_1=1/2$ it follows from the first of these equations that $\a_1=0$ or 
\begin{equation}\begin{split}\label{eq:a1pm}
\a_1 = \pm \sqrt{2\l_2\a_2-4\l_2^2 \a_2^2}. 
\end{split}\end{equation}
In the case where $\a_1=\pm \sqrt{2\l_2\a_2-4\l_2^2 \a_2^2}$, it follows from the second  fixed-point equation that 
$$\a_2 = \frac{3\l_2 - 1}{2 (\l_2+\l_2^2)}.$$
Plugging this value for $\a_2$ into \eqref{eq:a1pm}, we get that $ \pm \sqrt{2\l_2\a_2-4\l_2^2 \a_2^2}$ only takes real values if and only if $\l_2 \in (1/3,1)$. Hence the critical value for $\l_2$ for the existence of a phase transition is $\l_2^c:=\frac{1}{3}$. An easy calculation shows that all the solutions to \eqref{q4} correspond to a probability distribution. We have thereby shown the following theorem.

\begin{thm}
Consider the $4$-state generalized clock model on the binary tree where the critical value for RPT is $\l_1= 1/2$. Then the spin model has Gibbs measures with single-site marginals different from the equidistribution if and only if $\l_2 \in (1/3,1/2)$.
\end{thm}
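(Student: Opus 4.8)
\emph{Proof plan.} The strategy is to reduce the statement to a count of the fixed points of the two-dimensional recursion $F_4$, and then to solve the resulting polynomial system explicitly. By Proposition \ref{pro:plus}, a Gibbs measure with single-site marginal different from $\mathbf 1$ exists if and only if the iterates of the recursion started from $e_1$ --- i.e.\ the ``plus'' marginals $p_{C_n,o}^{\Phi,\d_0}$ taken along the spheres $C_n$ of the binary tree --- do not converge to the equidistribution. These iterates stay inside $P_+(\O_0/0)$ and, by the monotonicity of the recursion recalled above, form a monotone sequence; hence they converge to the largest fixed point of $F_4$ in $P_+(\O_0/0)$. Thus the theorem is equivalent to the assertion that $F_4$ has a fixed point in $P_+(\O_0/0)$ other than $\mathbf 1$ precisely when $\l_2\in(1/3,1/2)$. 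In the Fourier coordinates $(\a_1,\a_2)$ this is the system \eqref{q4}, to be studied under the standing assumption that $M$ be non-increasing, which in particular forces $\l_2\le\l_1=1/2$.

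I would then solve \eqref{q4} at $\l_1=1/2$. The first equation factors into $\a_1=0$ and the branch $\a_1^2=2\l_2\a_2-4\l_2^2\a_2^2$, i.e.\ $\a_1=\pm\sqrt{2\l_2\a_2-4\l_2^2\a_2^2}$ as in \eqref{eq:a1pm}. On the non-trivial branch the key simplification is that, after substituting $\a_1^2=2\l_2\a_2-4\l_2^2\a_2^2$, the denominator $\tfrac14+\a_1^2\l_1^2+\a_2^2\l_2^2$ collapses to $\tfrac14+\tfrac12\l_2\a_2$, so that the second equation of \eqref{q4} reduces to a quadratic in $\a_2$ one of whose roots is $\a_2=0$ (which, via the relation above, forces $\a_1=0$) and the other is $\a_2=\frac{3\l_2-1}{2(\l_2+\l_2^2)}$. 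Feeding this value back into $2\l_2\a_2-4\l_2^2\a_2^2$ gives $2\l_2\a_2-4\l_2^2\a_2^2=\frac{2(3\l_2-1)(1-\l_2)}{(1+\l_2)^2}$, which is $>0$ for $\l_2\in(1/3,1)$ and $=0$ at $\l_2=1/3$; intersecting with $\l_2\le1/2$ yields the interval $(1/3,1/2)$, the lower endpoint being excluded because there the solution degenerates to $\a_1=\a_2=0$, i.e.\ to $\mathbf 1$.

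It remains to close two gaps. First, the branch $\a_1=0$ of the first equation: the second equation then reads $\a_2\bigl(\tfrac14+\a_2^2\l_2^2\bigr)=\tfrac12\l_2\a_2$, giving $\a_2=0$ or $\a_2^2=\tfrac{2\l_2-1}{4\l_2^2}$, and the latter is impossible for $\l_2\le1/2$; so this branch produces no further solution and no further Gibbs measure. Second, one must verify that the non-trivial solutions actually correspond to probability vectors, i.e.\ that $0\le\a(j)\le1$ for every $j$ when $\a(j)=\tfrac14+\a_1\varphi_1(j)+\a_2\varphi_2(j)$ with $\a_2=\frac{3\l_2-1}{2(\l_2+\l_2^2)}$ and $\a_1^2=\frac{2(3\l_2-1)(1-\l_2)}{(1+\l_2)^2}$; this is a bounded, elementary estimate, monotone in $\l_2$ on $(1/3,1/2)$. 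Combining these two points with the reduction of the first paragraph completes the proof.

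The only genuinely conceptual step is the reduction in the first paragraph: one has to invoke Proposition \ref{pro:plus} together with the stochastic monotonicity of the tree recursion to legitimately replace ``there exists a Gibbs measure with a non-equidistributed single-site marginal'' by ``$F_4$ has a non-trivial fixed point in the symmetric class $P_+(\O_0/0)$''. After that the argument is pure algebra plus a finite positivity check, so no serious obstacle remains. One subtlety worth keeping in mind is the upper endpoint: $\l_2=\l_1=1/2$ is precisely the $4$-state Potts point $\theta=q+1$, where the non-trivial solution persists (consistently with Proposition \ref{pro:pottsRPT}); the half-open nature of $(1/3,1/2)$ therefore reflects the restriction to the genuinely ``clock'' regime $\l_1>\l_2$ rather than any bifurcation of the system \eqref{q4}.
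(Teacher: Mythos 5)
Your proposal is correct and follows essentially the same route as the paper: reduce to the polynomial fixed-point system \eqref{q4}, specialize to $\l_1=1/2$, factor the first equation into the trivial branch and $\a_1^2=2\l_2\a_2-4\l_2^2\a_2^2$, solve the second equation for $\a_2=\frac{3\l_2-1}{2(\l_2+\l_2^2)}$, and read off $\l_2\in(1/3,1/2)$ from the reality condition on $\a_1$ intersected with the non-increasingness constraint $\l_2\le\l_1$. Your additions — the explicit disposal of the $\a_1=0$ branch via $\a_2^2=\frac{2\l_2-1}{4\l_2^2}$, the simplified denominator $\tfrac14+\tfrac12\l_2\a_2$, and the closed form $\a_1^2=\frac{2(3\l_2-1)(1-\l_2)}{(1+\l_2)^2}$ — are correct and flesh out steps the paper elides, and the reduction ``PT $\Longleftrightarrow$ nontrivial symmetric fixed point'' via Proposition \ref{pro:plus} together with convergence of the plus-boundary iterates is also what the paper implicitly invokes (under the heading of boundary-law theory), so the level of rigor matches.
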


We have seen that there exist non-robust phase transitions in the generalized clock model on the binary tree for $q \in \{ 4,5 \}$. It is not difficult to prove that for a generalized clock model on a tree $T$ which exhibits no RPT there always exists another tree $T'$ s.t. $\br T = \br T' $ and $T'$ does not exhibit a PT. This is very much done by the same techniques proving the absence of RPT under the condition $\l_1 \br T <1$. The tree $T'$ is constructed by adding edges in series with sufficient length to the tree $T$, where the adding of these edges must be done sufficiently sparse such that $\br T = \br T' $. The recursion along these added series of edges then replaces the weakening of the coupling along the cutset in the notion of RPT (see the proof of Theorem 1.14 in \cite{RPT}). 

%
%


\begin{figure}
\centering
\begin{subfigure}{.5\textwidth}
  \centering
  \includegraphics[width=1.\linewidth]{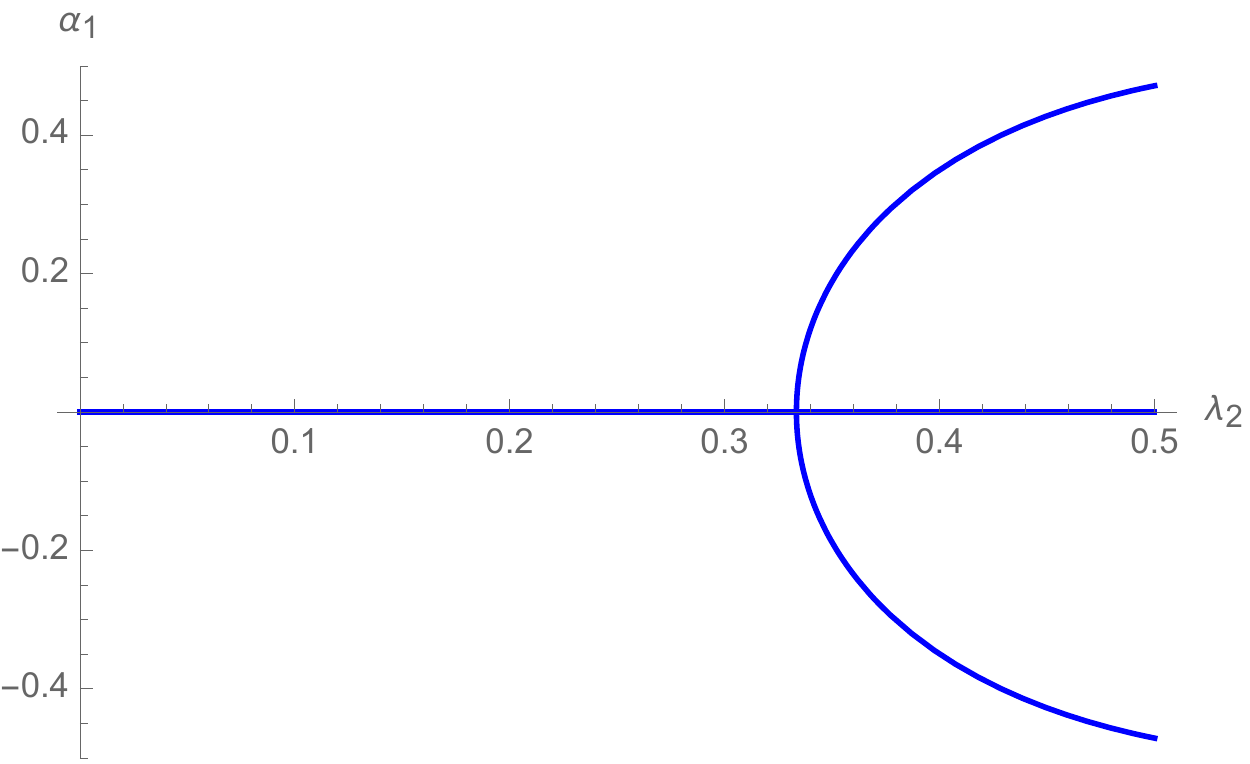}
  \label{fig:q4a1}
\end{subfigure}%
\begin{subfigure}{.5\textwidth}
  \centering
  \includegraphics[width=1.\linewidth]{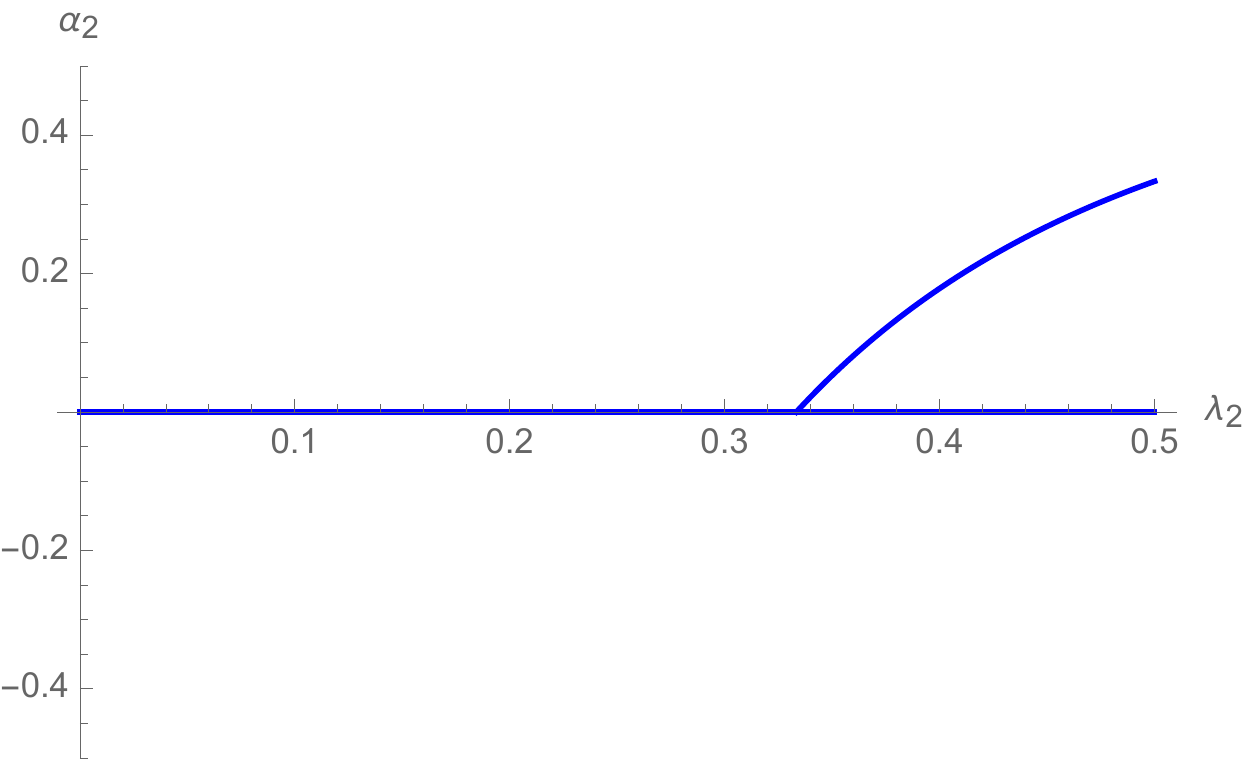}
  \label{fig:q4a2}
\end{subfigure}
\caption{The solutions to the boundary law equations \eqref{a1}, \eqref{a2} for $q=4$ in terms of the Fourier modes $\a_{1,2}$ depending on the third largest eigenvalue $\l_2$.}
\label{fig:q4}
\end{figure}

Let us map our solutions to the known solutions of the Potts model \cite{KuRo14}.
Note that at $\l_1=1/2$ there is a bifurcation of the $m=2$ branches from the free solution $\a =1$, 
where both branches exist only for bigger values of $\l_1$. Hence the only 
available solution is of type $m=1$, and here we only find the solution of the upper branch of the form, 
written in boundary law language, $(a,1,1,1), a \neq 1$. 
Also, our recursion acts 
in the subspace of these solutions which are symmetric w.r.t. $0$. 
Hence only two of the possible permutations persist, namely 
$(a,1,1,1), (1,1,a,1)$. These solutions are mapped to the upper and lower end of 
the pitchfork type branches in Figure \ref{fig:q4}. \\

In the Potts model the transfer matrix is given by $M(i,j) = \exp[\beta \mathbf{1}_{\{ i = j \}}]$ with inverse temperature $\beta>0$. It is known that the effective inverse temperature for the existence of a PT in the Potts model is given by $\beta_c = \frac{1}{2}\log(1+2\sqrt{q-1})$. As the second largest eigenvalue is given by $\l_2 = \frac{e^\beta - e^{-\beta}}{e^\beta + (q-1)e^{-\beta}}$ we get for the critical value $\l_2^c = \frac{2 \sqrt{q-1}}{q + 2 \sqrt{q-1}}$. For $q=4,5$ we have given the exact transition point for the existence of (non-robust) phase transitions at criticality, i.e. for $\l_1 = 1/2$. For the case $q=4$ we can even calculate the critical value $\l_2^c$ for the existence of non-robust phase transitions for any value of $\l_1 \in [0, 1/2]$: 
For $\a_1 \neq 0$ we get from the first equation of \eqref{q4} that
\begin{equation}\begin{split}
\a_1^2 \l_1^2 &= \frac{1}{2}\l_1 + \l_1 \l_2 \a_2 - \frac{1}{4}- \l_2^2\a_2^2 =: P_1(\a_2).
\end{split}\end{equation}
The second equation of \eqref{q4} gives us
\begin{equation}\begin{split}
\a_1^2 \l_1^2 &= - \l_2 \a_2 + \l_1 \a_2 + 2 \l_1\l_2 \a_2^2 =: P_2(\a_2). 
\end{split}\end{equation}
The critical line in $(\l_1,\l_2)$-space 
for the existence of multiple solutions to \eqref{q4} are given by the two equations $P_1(\a_2) = P_2(\a_2)$ and $P_1'(\a_2) = P_2'(\a_2)$. 
They are derived by the requirement that the parabolas described by $P_1$ and $P_2$ as a function of $\a_2$ have precisely one touching point described 
by a critical $\a_2$. It can be shown by elementary computations eliminating the $\a_2$ 
that the solutions to this system are given by $\l_1 = 4 \l_2 \frac{1-\l_2}{(1+\l_2)^2}$. The critical points are given by the dashed line in Figure \ref{PTvsRPT}.

\begin{figure}[ht]
    \centering
    \includegraphics[height=9cm,width=13cm]{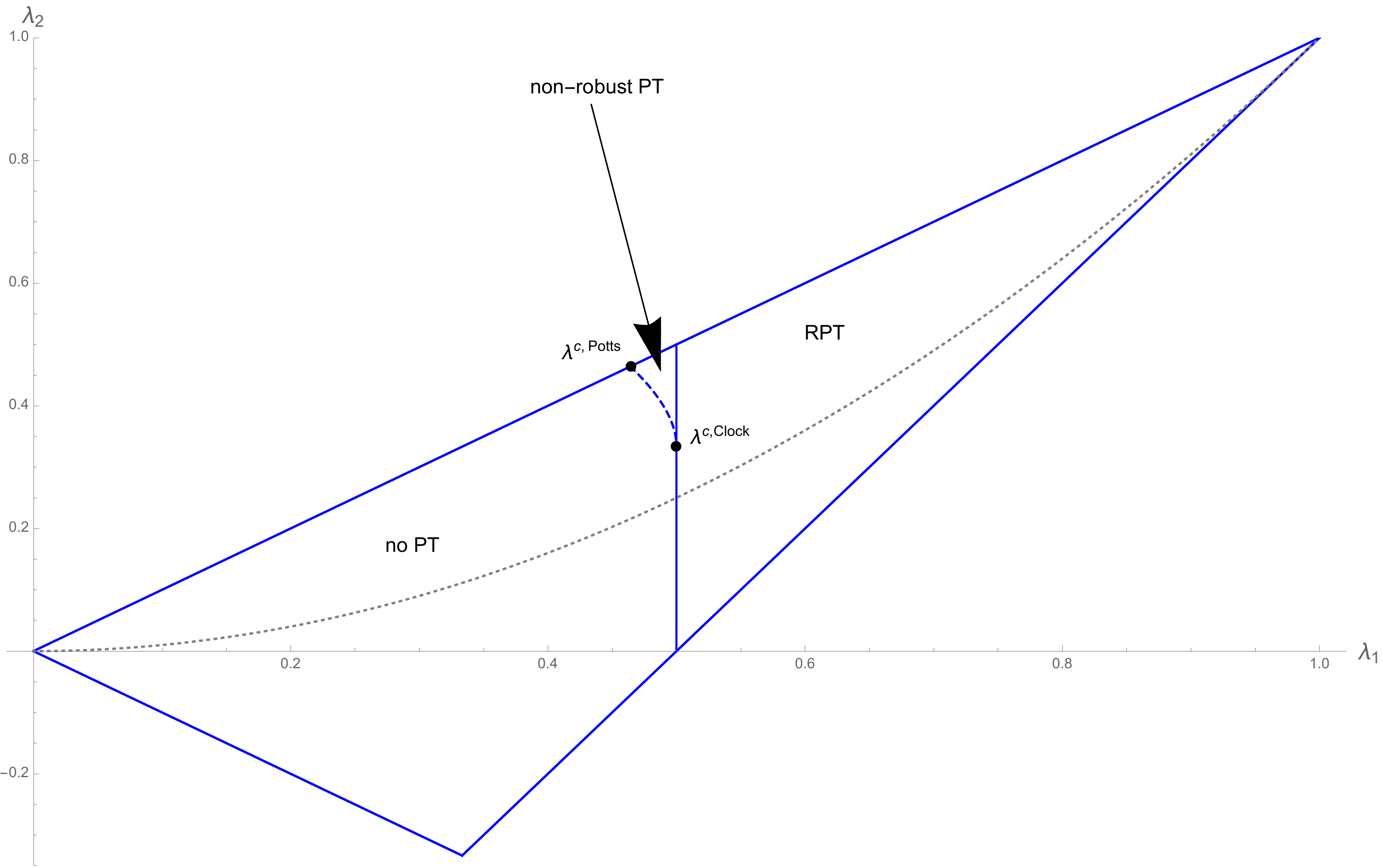} 
    \caption{
    The generalized clock model for $q=4$ can be described by the two eigenvalues $\l_{1,2}$ of the transfer matrix $M$. The region for the existence of non-robust phase transitions is given by the area above the dashed line inside the quadrangle. Note that the transition point in the Potts model is given by $\l_1= \l_2 = \frac{2\sqrt{q-1}}{q + 2 \sqrt{q-1}} \approx 0.4641$ and the transition point at the critical value for RPT in the generalized clock model on the binary tree is given by $\l_1 = 1/2$ and $\l_2 = 1/3$.
The eigenvalues of the transition matrix for the standard-clock model with scalarproduct-interaction are given by the dotted line. 
There are no non-robust PTs in the standard clock model.    
    }
    \label{PTvsRPT}
 \end{figure}

\begin{figure}[ht]
	\centering
	\includegraphics[height=9cm,width=13cm]{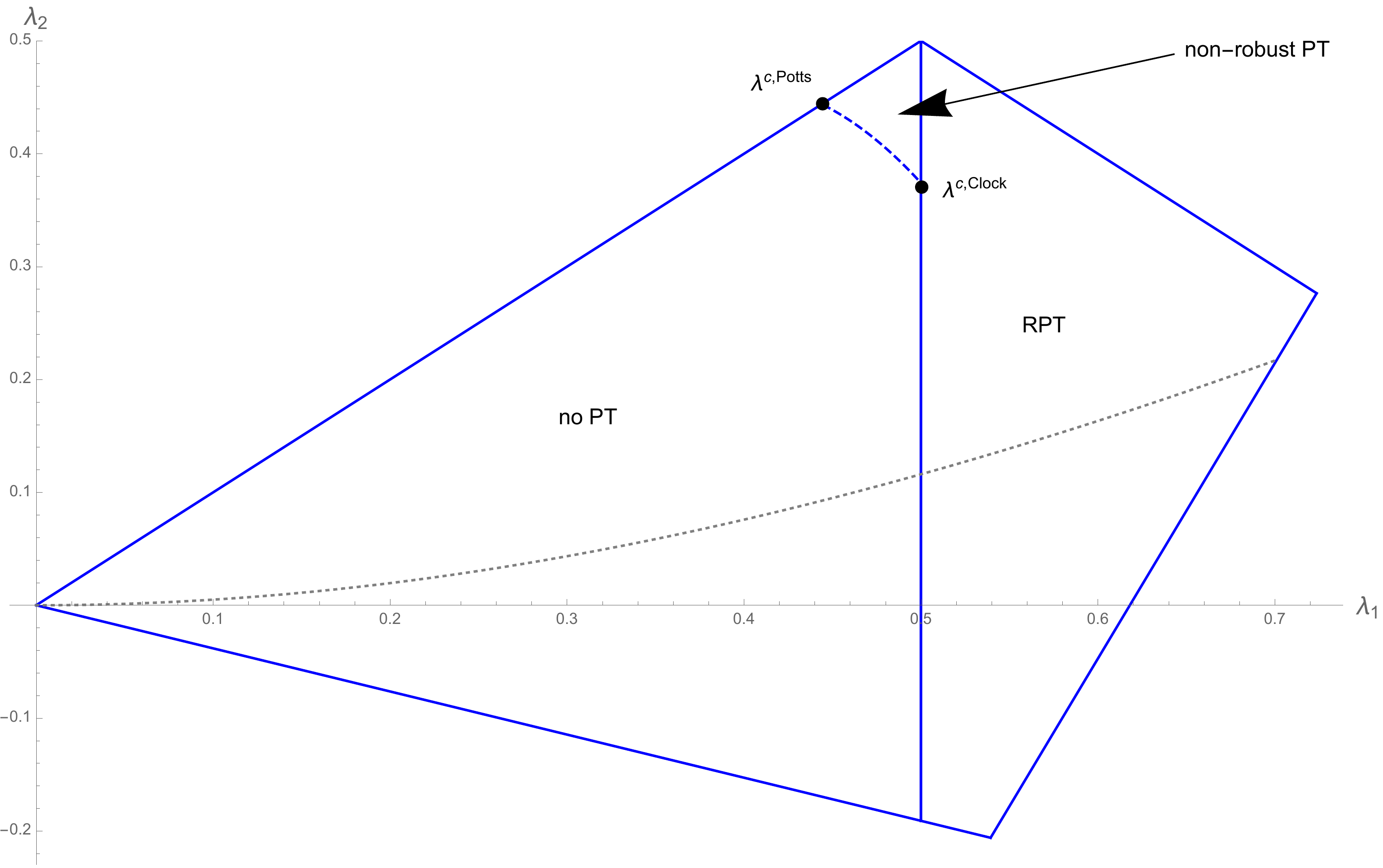} 
	\caption{The generalized clock model for $q=5$ can be described by the two eigenvalues $\l_{1,2}$ of the transfer matrix $M$. The region for the existence of non-robust phase transitions is given by the area above the dashed line inside the triangle. The eigenvalues for the standard-clock model are given by the dotted line. 
	}
	\label{PTvsRPTq5}
\end{figure} 

\begin{figure}[ht]
	\centering
	\includegraphics[height=4.5cm,width=7cm]{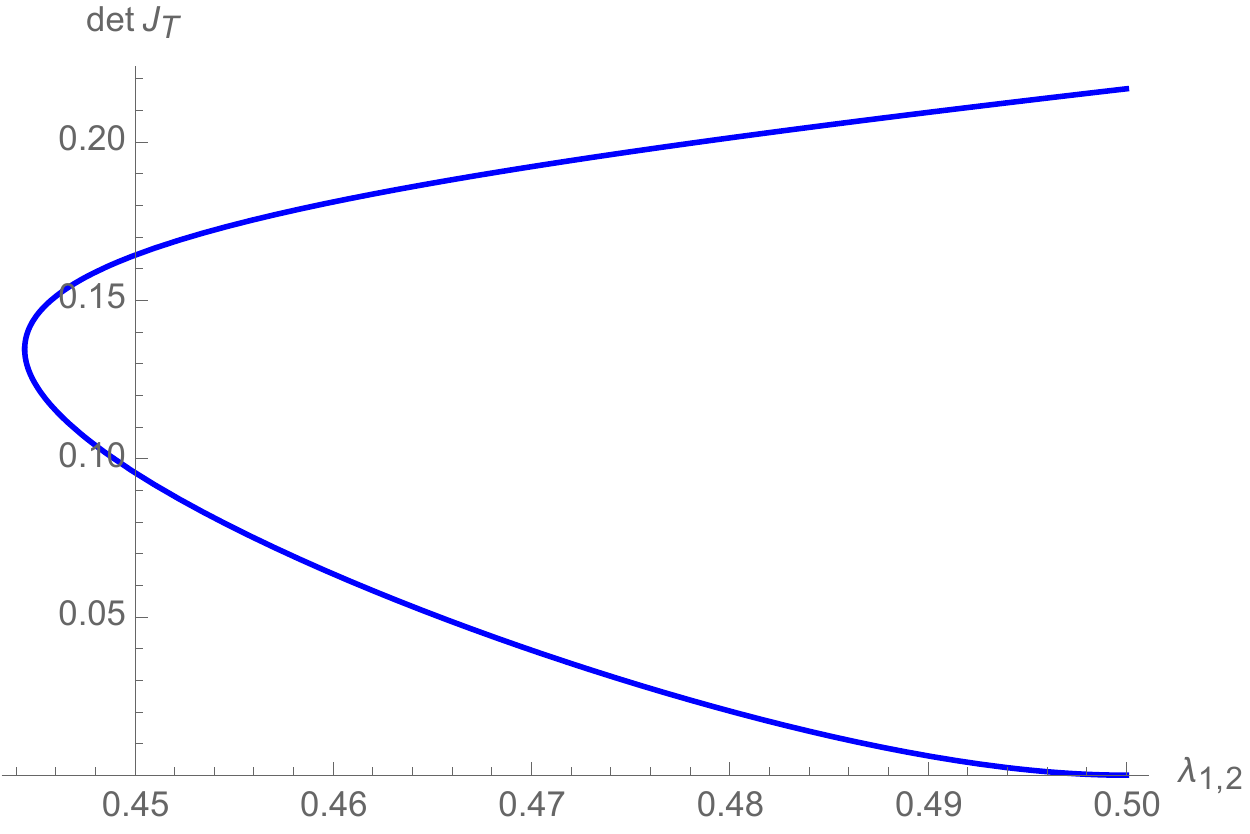} 
	\caption{In the Potts regime the values of the determinant of the Jacobi matrix of \eqref{fixed} for $\a_1 = \a_2 = \a_\pm$ are plotted against $\l = \l_1 = \l_2$.
	}
	\label{implicit}
\end{figure}

For $q=5$ the task to determine the transition line between the two regimes becomes more difficult. However, it is possible to use the implicit function theorem to prove the existence of non-robust phase transitions. 
In the Potts regime with $\l_1 = \l_2 = \l$ and $\l \in (4/9,1/2)$ there exist two b.l. solutions of the form $l=(a_\pm,1,1,1,1)$ with 
\begin{equation}\begin{split}
a_\pm = \left( \frac{1-e^\b}{2(q-1) } \pm \frac{\sqrt{5-4q - 2 e^\b + e^{2\b}}}{2(q-1)} \right)^{-1}
\end{split}\end{equation}
where $e^\b = \frac{1+ \l(q-1)}{1-\l}$. Here, $\l$ is the only non-trivial eigenvalue of the transfer matrix $M$. The Fourier modes $\a_1^\pm,\a_2^\pm$ of the solutions $a_\pm$ are then given by $\a_1^\pm = \a_2^\pm = \frac{2(1-a_\pm)}{5}$. Let $T : \R^2 \to \R^2$ be defined by 
\begin{equation}\label{fixed}
T(\a_1, \a_2) = \begin{pmatrix} \a_1 - \frac{1}{\frac{1}{5}+\a_1^2\l_1^2+\a_2^2\l_2^2} \left( \frac{2}{5}\l_1\a_1 + 2\l_1\l_2 v \a_1\a_2+ v\l_2^2 \a_2^2\right) \\
\a_2 - \frac{1}{\frac{1}{5}+\a_1^2\l_1^2+\a_2^2\l_2^2} \left( \frac{2}{5}\l_2\a_2 + 2\l_1\l_2 v \a_1\a_2+ v\l_1^2 \a_1^2\right)
\end{pmatrix}.
\end{equation}


\noindent
To apply the implicit function theorem we need to compute and discuss the Jacobian. We don't give details, but provide a plot 
of the Jacobian of $T$ for $\l_1 = \l_2$ and $\a_1 = \a_2 = \a_\pm$ which is given in Figure \ref{implicit}. Note that the Jacobi matrix becomes non-invertible right at the critical value for the existence of RPT, i.e., $\l_1 = \l_2 = 1/2$. This is the point where the lower branch of the b.l. solutions, i.e. $a_-$, turns into the free solution. For $\l_1 = \l_2 \in (4/9,1/2)$ the Jacobian is always invertible at the Potts solution. Hence for every $\l_1 =\l_2$ there exists an open neighborhood around that point in which there exist non-robust phase transitions.
Even though we did not give an analytic argument to show the form of the transition line between the regimes PT=RPT and PT$\neq$RPT, we can compute it numerically. The transition line for the case $q=5$ is given by the dashed line in Figure \ref{PTvsRPTq5}.

\section*{Acknowledgement}
This work is supported by Deutsche Forschungsgemeinschaft, RTG 2131
{\em High-dimensional Phenomena in Probability - Fluctuations and Discontinuity}. 
We thank Aernout van Enter for useful comments to an earlier draft of the manuscript.

\end{document}